\documentclass[12pt]{article}
\usepackage{amssymb}
\usepackage{amsmath}
\usepackage{amsthm}
\usepackage{color}
\usepackage{comment}
\usepackage{geometry}		
\usepackage{graphicx}
\usepackage{hyperref}

\geometry{
	hmargin={25mm,25mm},
	vmargin={15mm,40mm}
}
\let\originalleft\left
\let\originalright\right
\renewcommand{\left}{\mathopen{}\mathclose\bgroup\originalleft}
\renewcommand{\right}{\aftergroup\egroup\originalright}
\def\cN{\mathcal{N}}
\def\cO{\mathcal{O}}

\newtheorem{theorem}{Theorem}

\newtheorem{lemma}[theorem]{Lemma}

\theoremstyle{definition}
\newtheorem{definition}{Definition}

\theoremstyle{remark}

\begin{document}
\title{
Homoclinic tangencies with infinitely many asymptotically stable single-round periodic solutions.
}
\author{
S.S.~Muni, R.I.~McLachlan, and D.J.W.~Simpson\\\\
School of Fundamental Sciences\\
Massey University\\
Palmerston North\\
New Zealand
}
\maketitle
\begin{abstract}
We consider a homoclinic orbit to a saddle fixed point of an arbitrary $C^\infty$ map $f$ on $\mathbb{R}^2$ and study the phenomenon that $f$ has an infinite family of asymptotically stable, single-round periodic solutions. From classical theory this requires $f$ to have a homoclinic tangency. We show it also necessary for $f$ to satisfy a `global resonance' condition and for the eigenvalues associated with the fixed point, $\lambda$ and $\sigma$, to satisfy $|\lambda \sigma| = 1$. The phenomenon is codimension-three in the case $\lambda \sigma = -1$, but codimension-four in the case $\lambda \sigma = 1$
because here the coefficients of the leading-order resonance terms associated with $f$ at the fixed point must add to zero. We also identify conditions sufficient for the phenomenon to occur, illustrate the results for an abstract family of maps, and show numerically computed basins of attraction.
\end{abstract}

\section{Introduction}
The attractors of a dynamical system govern its typical long-time behaviour. The presence of many attractors is relatively exotic but occurs in diverse applications \cite{Fe08}. Examples include ocean circulation for which several different convection patterns can be stable simultaneously \cite{Ra95b}. In neuroscience the complex nature of information processing and storage of neurons has been attributed to the coexistence of several stable beating and bursting states \cite{CaBa93}. In coupled chemical systems, infinitely many attractors may be possible \cite{NgFe11}.

This paper treats multistability arising from homoclinic tangencies in their simplest setting:
for saddle fixed points of two-dimensional maps.
Such a fixed point has one-dimensional stable and unstable manifolds
and a tangential intersection between these produces a homoclinic tangency, Fig.~\ref{fig:tangency_sketch}.
This is a codimension-one phenomenon because its occurrence can be equated to a single scalar condition.
There exists a vast literature on homoclinic tangencies
as they are a fundamental mechanism for the destruction of hyperbolicity and the creation of robust chaos \cite{PaTa93}.
\begin{figure}[t!]
\begin{center}
\includegraphics{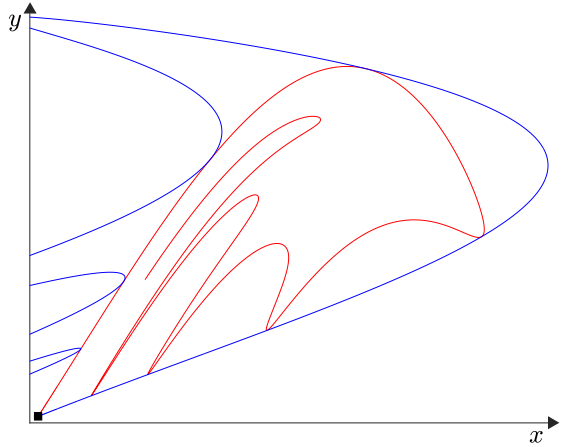}
\caption{A sketch of tangentially intersecting stable [blue] and unstable [red] manifolds of a saddle fixed point of a two-dimensional map. Note that the tangential intersections form a homoclinic orbit.
}
\label{fig:tangency_sketch}
\end{center}
\end{figure}
In particular, Newhouse \cite{Ne74} showed it is typical for homoclinic tangencies to give rise
to the coexistence of infinitely many asymptotically stable periodic solutions when the map is perturbed.
These periodic solutions are multi-round, meaning they
involve more than one excursion far from the fixed point before repeating.
For generic homoclinic tangencies, all single-round periodic solutions (that are sufficiently close to the homoclinic
orbit associated with the tangency) are unstable \cite{GaSi72,GaSi73}.
In this paper we address the following question: in the space of two-dimensional $C^\infty$ maps,
what degeneracy allows for the coexistence of infinitely many asymptotically stable, single-round periodic solutions?

This question has been answered for more restricted classes of maps.
For piecewise-linear continuous maps such infinite coexistence occurs as a codimension-three phenomenon:
the stable and unstable manifolds must coincide on intervals and it is necessary to have $|\lambda \sigma| = 1$,
where $\lambda$ and $\sigma$ are the eigenvalues associated with the fixed point \cite{Si14b,Si14,DSCT}.
For smooth area-preserving maps such infinite coexistence is codimension-two:
in addition to the tangency the map needs to satisfy a `global resonance' condition
(discussed in \S\ref{sec:sec2p2}) 
\cite{DeGo15,DeGo14,GoGo09,GoSh05}.
For $C^\infty$ maps we find that the map needs to satisfy these two conditions and
$|\lambda \sigma| = 1$ (obtained for free in the area-preserving setting).
This suggests the phenomenon is codimension-three, and we show it is codimension-three in the case $\lambda \sigma = -1$
by obtaining sufficient conditions for infinite coexistence.
Interestingly in the case $\lambda \sigma = 1$ the leading-order resonance terms need to satisfy a certain condition
(that holds automatically in the area-preserving setting)
and consequently the phenomenon here is codimension-four.

We can then expect such infinite coexistence to arise as codimension-three and four bifurcations
in prototypical families of maps containing sufficiently many parameters,
such as the generalised H\'enon map of \cite{GoKu05}.
These bifurcations will likely be hubs for multistability,
but to date we are not aware of any examples of this, perhaps due to the high codimension.
Certainly there are several examples of comparable codimension-two phenomena
that serve as focal points of parameter space
and theoretical unfoldings obtained for these are crucial to explaining the greater bifurcation structure,
for instance \cite{Ga94,HiLa95,Si20}.

The remainder of this paper is organised as follows. 
In \S\ref{sec:setup} we state the main results and
in \S\ref{sec:example} we illustrate these with an abstract family of maps.
The next three sections contain proofs.
In \S\ref{sec:necessaryConditions} we obtain three conditions necessary for the infinite coexistence to occur.
Our basic strategy is to estimate $D f^n$ at one point of a single-round periodic solution of period $n$.
The eigenvalues of this matrix determine the stability of the periodic solution
and our results are achieved by determining conditions that prevent the trace of $D f^n$ diverging as $n \to \infty$.
In \S\ref{sec:positiveCase} we establish additional results for the orientation-preserving case $\lambda \sigma = 1$
and in \S\ref{sec:negativeCase} we similarly treat the orientation-reversing case $\lambda \sigma = -1$.
Finally conclusions are presented in \S\ref{sec:conc}.
\label{sec:intro}
\section{Main results}

\label{sec:setup}
\subsection{Local coordinates}
\label{sec:sec2p1}

Let $f$ be a $C^\infty$ map on $\mathbb{R}^2$.
Suppose $f$ has a homoclinic orbit to a saddle fixed point
with eigenvalues $\lambda, \sigma \in \mathbb{R}$ where
\begin{equation}
0 < |\lambda| < 1 < |\sigma|.
\label{eq:eigenvalueAssumption}
\end{equation}
By applying an affine coordinate change
we can assume the fixed point lies at the origin, $(x,y) = (0,0)$,
about which the stable manifold is tangent to the $x$-axis
and the unstable manifold is tangent to the $y$-axis.
Then by Sternberg's linearisation theorem \cite{Qualth,St58}
there exists a locally valid $C^\infty$ coordinate change under which $f$ is transformed to
\begin{equation}
T_0(x,y) = \begin{bmatrix}
\lambda x \left( 1 + x y F(x,y) \right) \\
\sigma y \left( 1 + x y G(x,y) \right)
\end{bmatrix},
\label{eq:T01}
\end{equation}
where $F$ and $G$ are $C^\infty$.
In these new coordinates let $\cN$ be a bounded convex neighbourhood of the origin for which
\begin{equation}
f(x,y) = T_0(x,y), \quad \text{for all}~(x,y) \in \cN,
\label{eq:N}
\end{equation}
see Fig.~\ref{fig:transv_intn_article}. 
\begin{figure}[t!]
\begin{center}
\includegraphics{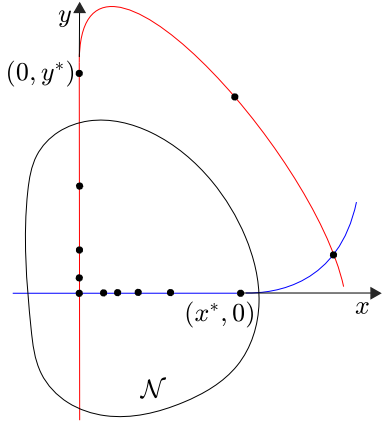}
\caption{A sketch of the stable [blue] and unstable [red] manifolds of the origin for a $C^\infty$ map $f$ satisfying \eqref{eq:N}. A homoclinic orbit is indicated with black dots in the case $0<\lambda<1$ and $\sigma > 1$.}
\label{fig:transv_intn_article}
\end{center}
\end{figure}

The functions $F$ and $G$ represent resonant terms that cannot be eliminated by the coordinate change.
It is possible to make $F$ and $G$ identically zero if $\lambda$ and $\sigma$ are {\em non-resonant}
(in the sense that there do not exist integers $p \ge -1$ and $q \ge -1$ with $p + q \ge 1$ for which $\lambda^p \sigma^q = 1$,
\cite{Qualth,St58}).

In $\cN$ the local stable and unstable manifolds of the origin coincide with the $x$ and $y$-axes, respectively.
Let $(x^*,0)$ and $(0,y^*)$ be some points of the homoclinic orbit.
Without loss of generality we assume $x^* > 0$ and $y^* > 0$.
We further assume $x^*$ and $y^*$ are chosen sufficiently small that
\begin{equation}
(x^*,0), (\lambda x^*,0), \left( 0, \tfrac{y^*}{\sigma} \right), \left( 0, \tfrac{y^*}{\sigma^2} \right) \in \cN.
\label{eq:fourPoints}
\end{equation}

Note that if $\lambda,\sigma > 0$ then the condition that $(\lambda x^{*},0)$ and $\left(0,\frac{y^{*}}{\sigma^{2}}\right)$ belong to $\cN$ are redundant. We do not require $(0,y^{*})$ to belong to $\cN$ because it can be interpreted as the starting point of the excursion so does not need to map under $T_{0}$. Equation $\eqref{eq:fourPoints}$ ensures that the forward orbit of $(x^*,0)$ and a backward orbit of $(0,y^*)$ (these are both part of the homoclinic orbit) converge to the origin in $\cN$. By assumption $f^m(0,y^*) = (x^*,0)$ for some $m \ge 1$.
We let $T_1 = f^m$ and expand $T_{1}$ in a Taylor series centred at $(x,y) = (0,y^*)$:
\begin{equation}
T_1(x,y) = \begin{bmatrix}
x^* + c_1 x + c_2 (y - y^*) + \cO \left( \left( x, y-y^* \right)^2 \right) \\
d_1 x + d_2 (y - y^*) + d_3 x^2 + d_4 x (y - y^*) + d_5 (y - y^*)^2 + \cO \left( \left( x, y-y^* \right)^3 \right)
\end{bmatrix},
\label{eq:T11}
\end{equation}
where here we have explicitly written the terms that will be important below.
Here and throughout this paper $\cO$ denotes big-O notation \cite{De81}.

The value of $m$ and the values of the coefficients $c_i, d_i \in \mathbb{R}$
depend on our choice of $x^*$ and $y^*$.
It is a simple exercise to show that $d_1 = d_1(x^*,y^*) = \frac{\xi y^*}{x^*}$, for some constant $\xi$.
That is, $\frac{d_1 x^*}{y^*}$ is invariant with respect to a change in our choice of $x^*$ and $y^*$. This invariant (analogous to a quantity denoted $\tau$ in  \cite{DeGo15,GoSh87}) will be important below.

From \eqref{eq:T11} we have det$\Big(DT_{1}\big(0,y^{*}\big)\Big) = c_{1}d_{2} - c_{2}d_{1}$. Thus if $c_{1}d_{2} - c_{2}d_{1} \neq 0$ then $f$ is locally invertible along the homoclinic orbit.
The stable and unstable manifolds of the origin intersect tangentially at $(0,y^*)$ if and only if $d_2 = 0$.
From basic homoclinic theory \cite{PaTa93,Ro04}
it is known that a tangential intersection is necessary to have stable single-round periodic solutions
near the homoclinic orbit.
For completeness we prove this as part of Theorem \ref{th:necessaryConditions} below.

\subsection{Three necessary conditions for infinite coexistence}
\label{sec:sec2p2}
It can be expected that points of single-round periodic solutions in $\cN$
converge to the $x$ and $y$-axes as the period tends to infinity.
This motivates our introduction of the set
\begin{equation}
\cN_\eta = \left\{ (x,y) \in \cN \,\big|\, |x y| < \eta \right\},
\label{eq:Nalt}
\end{equation}
where $\eta > 0$.
Below
we control the resonance terms in \eqref{eq:T01} by
choosing the value of $\eta$ to be sufficiently small. We first provide an $\eta$-dependent definition of single-round periodic solutions (SR abbreviates single-round).
\begin{figure}\hspace{3cm}
\includegraphics{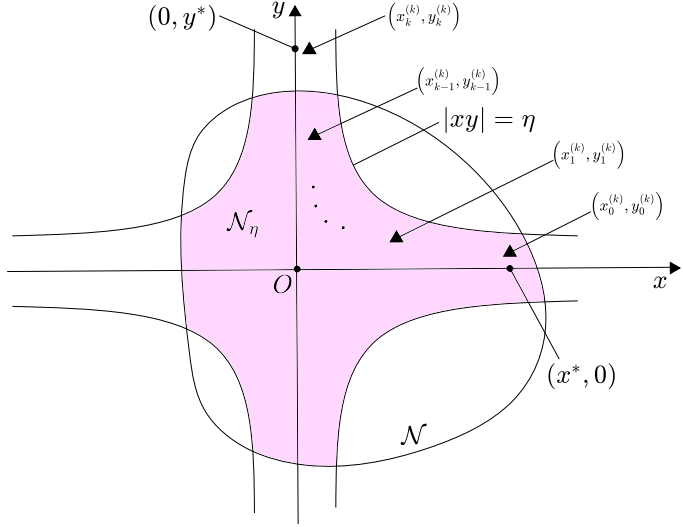}
\caption{Selected points of an {\em ${\rm SR}_k$}-solution (single-round periodic solution satisfying Definition \ref{df:SRkSolution}) in the case $\lambda > 0$ and $\sigma > 0$.
The region $\cN_\eta$ is shaded.}
\label{fig:SRP_defn}
\end{figure}
\begin{definition}
An {\em ${\rm SR}_k$-solution}
is a period-$(k+m)$ solution of $f$
involving $k$ consecutive points in $\cN_\eta$.
\label{df:SRkSolution}
\end{definition}

Below we consider ${\rm SR}_k$-solutions that approach the homoclinic orbit as $k \rightarrow \infty$ (see Theorem \ref{th:necessaryConditions}). We denote points of an ${\rm SR}_k$-solution
by $\left( x^{(k)}_j, y^{(k)}_j \right)$, for $j = 0,\ldots,k+m-1$,
with $\left( x^{(k)}_j, y^{(k)}_j \right) \in \cN_\eta$ for all $j = 0,\ldots,k-1$
as shown in Fig.~\ref{fig:SRP_defn}.
The point $\left(x_{k}^{(k)},y_{k}^{(k)}\right)$ is a fixed point of $T_{0}^{k} \circ T_{1}$, thus the eigenvalues of $D \left( T_0^k \circ T_1 \right) \left( x^{(k)}_k, y^{(k)}_k \right)$
determine the stability of the ${\rm SR}_k$-solution.
Let $\tau_k$ and $\delta_k$ denote the trace and determinant of this matrix, respectively.
If the point $(\tau_k,\delta_k)$ lies in the interior of the triangle shown in Fig.~\ref{fig:stability_triangle}
then the ${\rm SR}_k$-solution is asymptotically stable.
If it lies outside the closure of this triangle then the ${\rm SR}_k$-solution is unstable \cite{El08,LaTr02,Ro04}.
\begin{figure}[h!]
\begin{center}
\includegraphics{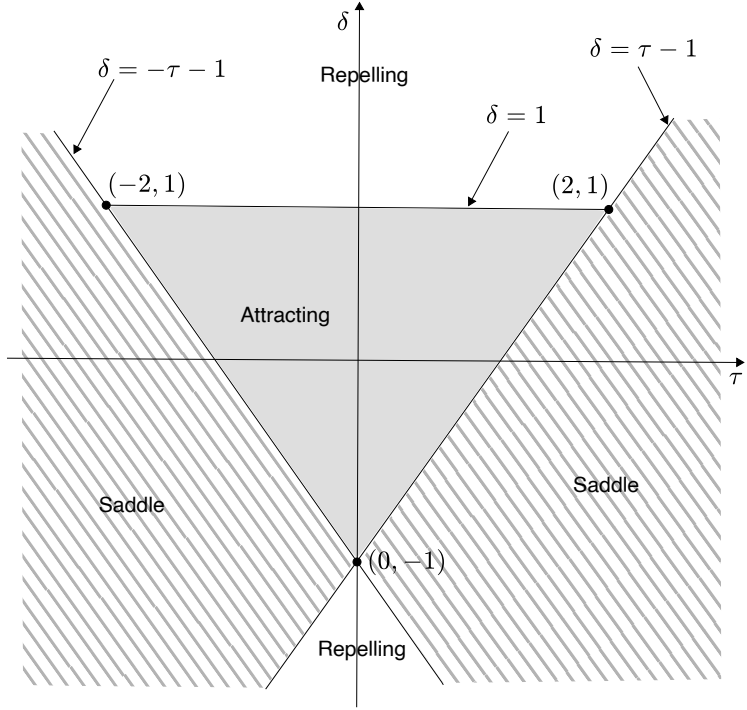}
\caption{The stability of a period-$n$ solution of $f$ in terms of the trace $\tau$ and determinant $\delta$ of the Jacobian matrix $D f^n$ evaluated at one point of the solution.}
\label{fig:stability_triangle}
\end{center}
\end{figure}

\begin{theorem}
Suppose \eqref{eq:N} and \eqref{eq:fourPoints} are satisfied, with $x^*, y^* > 0$,
and $c_1 d_2 - c_2 d_1 \ne 0$. There exists $\eta > 0$ such that
if $f$ has a stable ${\rm SR}_k$-solution for infinitely many values of $k \ge 1$
and points $\left( x^{(k)}_0, y^{(k)}_0 \right)$ of these solutions converge to $(x^*,0)$ as $k \to \infty$, then 
\begin{equation}
d_2 = 0.
\label{eq:tangencyCondition}
\end{equation}
If also $d_5 \ne 0$, then
\begin{equation}
|\lambda \sigma| = 1,
\label{eq:det1Condition}
\end{equation}
and
\begin{equation}
|d_1| = \frac{y^*}{x^*},
\label{eq:globalResonanceCondition}
\end{equation}
with $d_1 = \frac{y^*}{x^*}$ in the case $\lambda \sigma = 1$.
\label{th:necessaryConditions}
\end{theorem}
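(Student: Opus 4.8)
The strategy is to estimate $D(T_0^k\circ T_1)(x_k^{(k)},y_k^{(k)})$ along a sequence $k\to\infty$ and show that each conclusion is forced by the requirement that its trace $\tau_k$ and determinant $\delta_k$ remain inside the \emph{bounded} stability triangle of Fig.~\ref{fig:stability_triangle}. The main tool is a description of $T_0^k$ in ``cross coordinates'': having fixed $\eta$ small enough that the resonant terms $xyF$ and $xyG$ in \eqref{eq:T01} are uniformly small on $\cN_\eta$, one shows that for large $k$, whenever $T_0^k(x_0,y_0)=(x_k,y_k)$ with $(x_0,y_0)$ near $(x^*,0)$ and $(x_k,y_k)$ near $(0,y^*)$, one has $x_k=\lambda^k x_0(1+o(1))$ and $y_0=\sigma^{-k}y_k(1+o(1))$, with the $o(1)$ terms and their first derivatives tending to $0$ as $k\to\infty$. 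Differentiating, the matrix $DT_0^k(x_0,y_0)$ has diagonal entries $\lambda^k(1+o(1))$ and $\sigma^k(1+o(1))$, upper-right entry $o((\lambda\sigma)^k)$, and lower-left entry $o(1)$. Also $(x_0^{(k)},y_0^{(k)})\to(x^*,0)$ together with $c_1d_2-c_2d_1\ne0$ and $T_1(0,y^*)=(x^*,0)$ makes $T_1$ a local diffeomorphism near $(0,y^*)$, so $(x_k^{(k)},y_k^{(k)})=T_1^{-1}(x_0^{(k)},y_0^{(k)})\to(0,y^*)$ and $DT_1(x_k^{(k)},y_k^{(k)})\to DT_1(0,y^*)$. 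Multiplying the two Jacobians and using \eqref{eq:T11}, this yields
\[
\delta_k=(\lambda\sigma)^k\,(c_1d_2-c_2d_1)\,(1+o(1)),\qquad
\tau_k=\sigma^k\,\partial_y[T_1]_2(x_k^{(k)},y_k^{(k)})\,(1+o(1))+o(|\sigma|^k),
\]
where $[T_1]_2$ is the second component of $T_1$.

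Since $c_1d_2-c_2d_1\ne0$, the first identity gives $|\delta_k|\to\infty$ whenever $|\lambda\sigma|>1$, which is impossible for stable solutions; hence $|\lambda\sigma|\le1$, and then the error term in the second identity sharpens to $o(1)$. As $\partial_y[T_1]_2(x_k^{(k)},y_k^{(k)})\to d_2$ (by \eqref{eq:T11} and $(x_k^{(k)},y_k^{(k)})\to(0,y^*)$), we get $\tau_k=\sigma^k(d_2+o(1))$, which is unbounded unless $d_2=0$; this proves \eqref{eq:tangencyCondition}.

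Now assume $d_2=0$ and $d_5\ne0$; then $c_1d_2-c_2d_1=-c_2d_1\ne0$, so $d_1\ne0$. Put $u_k:=y_k^{(k)}-y^*\to0$. Equating the two expressions for $y_0^{(k)}$, namely $[T_1]_2(x_k^{(k)},y_k^{(k)})=d_1x_k^{(k)}+d_3(x_k^{(k)})^2+d_4x_k^{(k)}u_k+d_5u_k^2+\cO((x_k^{(k)},u_k)^3)$ from \eqref{eq:T11} and $\sigma^{-k}y_k^{(k)}(1+o(1))$ from the cross coordinates, then multiplying by $\sigma^k$ and substituting $x_k^{(k)}=\lambda^k x^*(1+o(1))$, and using that $|\lambda\sigma|\le1$ together with a short bootstrap showing $\sigma^k u_k^2$ is bounded (so the cubic terms are negligible) to discard the inessential contributions, gives the key relation
\[
y^*+u_k=d_1x^*(\lambda\sigma)^k(1+o(1))+d_5\,\sigma^k u_k^2+o(1),
\]
while the same substitutions give $\tau_k=2d_5\,\sigma^k u_k\,(1+o(1))+\cO(1)$. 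If $|\lambda\sigma|<1$, the key relation forces $d_5\sigma^k u_k^2\to y^*\ne0$, so $|\sigma^k u_k|\to\infty$ and $\tau_k$ is unbounded --- impossible; this proves \eqref{eq:det1Condition}. Finally, with $\lambda\sigma\in\{1,-1\}$, restrict to a subsequence of $k$ of a single parity (at least one parity occurs infinitely often), so that $(\lambda\sigma)^k\equiv s$ for a fixed $s\in\{1,-1\}$, with $s=1$ when $\lambda\sigma=1$. On this subsequence the key relation reads $d_5\sigma^k u_k^2\to y^*-s\,d_1x^*$, while boundedness of $\tau_k$ forces $\sigma^k u_k$, and hence $\sigma^k u_k^2=(\sigma^k u_k)\,u_k$, to converge to $0$; as $d_5\ne0$ this gives $y^*-s\,d_1x^*=0$, that is $|d_1|=\frac{y^*}{x^*}$, with $d_1=\frac{y^*}{x^*}$ when $\lambda\sigma=1$. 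This is \eqref{eq:globalResonanceCondition}.

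The main obstacle is the first paragraph --- establishing the cross-coordinate asymptotics of $T_0^k$ and $DT_0^k$, together with their derivatives, \emph{uniformly} in $k$ while the resonant terms $F$ and $G$ are present. This is exactly where the smallness of $\eta$ is used: $|xy|<\eta$ along the relevant segment of the orbit keeps the corrections in $T_0$ and its derivatives controlled as $k\to\infty$. Everything after that is bookkeeping with the orders of $\lambda^k$, $\sigma^k$, $(\lambda\sigma)^k$ and $u_k$.
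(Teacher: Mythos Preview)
Your plan is correct and follows the same strategy as the paper: estimate the trace $\tau_k$ and determinant $\delta_k$ of $D(T_0^k\circ T_1)$ at the periodic point and show they leave the stability triangle unless \eqref{eq:tangencyCondition}--\eqref{eq:globalResonanceCondition} hold. The paper organises the local asymptotics of $T_0^k$ and $DT_0^k$ as a chain of explicit induction lemmas (Lemmas~\ref{le:y0Bound}--\ref{le:T0k}), all stated under the standing hypothesis $|\lambda\sigma|\le1$, and then disposes of $|\lambda\sigma|>1$ by passing to $f^{-1}$. Your route via the Shilnikov cross-form is slightly more general: it supplies the same derivative bounds on $DT_0^k$ without first assuming $|\lambda\sigma|\le1$, which lets you exclude $|\lambda\sigma|>1$ directly from $\delta_k=(\lambda\sigma)^k(c_1d_2-c_2d_1)(1+o(1))$ --- a neat shortcut the paper does not take. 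Your parity-subsequence argument in the final step is also more explicit than the paper's somewhat informal handling of the balance $y^*=(\lambda\sigma)^k d_1x^*$. What you flag as ``the main obstacle'' (uniform-in-$k$ estimates on $T_0^k$ and its derivative in the presence of the resonance terms $F,G$) is exactly the content of the paper's Lemmas~\ref{le:y0Bound}--\ref{le:T0k}, so that part is not left unproved in the paper; it is simply packaged differently.
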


Theorem \ref{th:necessaryConditions} is proved in $\S\ref{sec:necessaryConditions}$. Here we provide some technical remarks. Equations \eqref{eq:tangencyCondition}--\eqref{eq:globalResonanceCondition}
are independent scalar conditions, thus together represent a codimension-three scenario.
The condition $d_2 = 0$ is equivalent to $(0,y^*)$ being a point of homoclinic tangency.
With also $d_5 \ne 0$ the tangency is quadratic.
The condition $|\lambda \sigma| = 1$ is equivalent to $f$ being area-preversing at the origin.

The condition $|d_1| = \frac{y^*}{x^*}$ is a global condition,
termed {\em global resonance} in the area-preserving setting \cite{GoGo09}. This condition is well-defined because the value of $\frac{d_1 x^*}{y^*}$ is independent of our choice of $x^*$ and $y^*$.
To give geometric meaning to~\eqref{eq:globalResonanceCondition},
consider the perturbed point $(\alpha x^*, y^*)$, where $\alpha \in \mathbb{R}$ is small.
Under $T_1$ this point maps to
\begin{equation}
\Big( x^* \left( 1 + \cO(\alpha) \right), y^* \left( \beta + \cO \left( \alpha^2 \right) \right) \Big),
\nonumber
\end{equation}
where $\beta = \frac{d_1 x^* \alpha}{y^*}$. Notice we are writing the $x$ and $y$-components of these points as multiples of $x^*$ and $y^*$ so that these values provide a relative sense of scale.
Then condition~\eqref{eq:globalResonanceCondition} is equivalent to $|\beta| = |\alpha|$. Therefore~\eqref{eq:globalResonanceCondition} implies that when a point that is perturbed from $(0,y^*)$ in the $x$-direction by an amount $\alpha x^*$
is mapped under $T_1$, the result is a point that is perturbed from $(x^*,0)$ in the $y$-direction by an amount $\pm \alpha y^*$, to leading order.
\subsection{The orientation-preserving case}
\label{sec:sec2p3}

Theorem~\ref{th:necessaryConditions} tells us that the infinite coexistence requires $|\lambda \sigma| = 1$.  Here we consider the case $\lambda \sigma = 1$ for which $f$ is orientation-preserving at the origin.
In this case $\lambda$ and $\sigma$ are resonant and $T_0$ can only be reduced to the form
\begin{equation}
T_0(x,y) = \begin{bmatrix}
\lambda x \left( 1 + x y A(x y) \right) \\
\frac{1}{\lambda} y \left( 1 + x y B(x y) \right)
\end{bmatrix},
\label{eq:T02}
\end{equation}
where $A$ and $B$ are scalar $C^\infty$ functions \cite{GoSh96,St58}.
Let
\begin{align}
a_1 &= A(0), & b_1 &= B(0).
\label{eq:a1b1}
\end{align}
The following theorem tells us that here infinite coexistence is only possible if $a_1 + b_1 = 0$.  This condition is satisfied automatically for area-preserving maps because $\det(D T_0) = 1 + 2 (a_1 + b_1) x y + \cO(x^2 y^2)$. 
\begin{theorem}
Suppose \eqref{eq:N} and \eqref{eq:fourPoints} are satisfied with \eqref{eq:T02} and $x^*, y^* > 0$.  Further suppose $\lambda \sigma = 1$, $d_1 = \frac{y^*}{x^*}$, $d_2 = 0$, and $d_5 \ne 0$.
There exists $\eta > 0$ such that if $f$ 
has a stable ${\rm SR}_k$-solution for infinitely many values of $k \ge 1$
and points $\left( x^{(k)}_0, y^{(k)}_0 \right)$ of these solutions converge to $(x^*,0)$ as $k \to \infty$,
then $a_1 + b_1 = 0$.
\label{th:a1b1}
\end{theorem}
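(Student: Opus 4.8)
The plan is to follow the strategy of Theorem~\ref{th:necessaryConditions} — estimate the Jacobian of the return map $T_0^k\circ T_1$ at a point of an ${\rm SR}_k$-solution — and then to read off the extra condition by tracking the product $xy$ once around the orbit. Write $\bigl(x^{(k)}_j,y^{(k)}_j\bigr)$, $j=0,\dots,k+m-1$, for the points of the ${\rm SR}_k$-solution and set $p^{(k)}_j=x^{(k)}_jy^{(k)}_j$. First, arguing as in \S\ref{sec:necessaryConditions} and taking $\eta$ small, I would establish the uniform estimates $x^{(k)}_j=\lambda^jx^*\bigl(1+o(1)\bigr)$ and, since $\lambda\sigma=1$, $p^{(k)}_j=\lambda^kx^*y^*\bigl(1+o(1)\bigr)$ for $0\le j\le k$; consequently $x^{(k)}_k=\cO(\lambda^k)$, $w_k:=y^{(k)}_k-y^*\to0$, and $\sum_{j=0}^{k-1}\bigl(p^{(k)}_j\bigr)^2=k\lambda^{2k}(x^*y^*)^2\bigl(1+o(1)\bigr)$. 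Next, exactly as in the proof of Theorem~\ref{th:necessaryConditions} but now with $d_2=0$ already assumed, the trace of $D\bigl(T_0^k\circ T_1\bigr)\bigl(x^{(k)}_k,y^{(k)}_k\bigr)$ is $\tau_k=\sigma^k w_k\bigl(2d_5+o(1)\bigr)+\cO(1)$; since $d_5\ne0$, boundedness of $\tau_k$ (necessary for stability) forces $\sigma^k w_k=\cO(1)$, i.e.\ $w_k=\cO(\lambda^k)$.

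The new ingredient is the product $p=xy$. By \eqref{eq:T02}, $T_0$ sends $(x,y)\mapsto(x',y')$ with $x'y'=xy\bigl(1+xyA(xy)\bigr)\bigl(1+xyB(xy)\bigr)=xy+(a_1+b_1)(xy)^2+\cO\bigl((xy)^3\bigr)$, so summing the increments over the $k$ steps inside $\cN_\eta$ gives
\begin{equation*}
p^{(k)}_k-p^{(k)}_0=(a_1+b_1)\sum_{j=0}^{k-1}\bigl(p^{(k)}_j\bigr)^2+\cO\!\left(\sum_{j=0}^{k-1}\bigl|p^{(k)}_j\bigr|^3\right)=(a_1+b_1)\,k\,\lambda^{2k}(x^*y^*)^2\bigl(1+o(1)\bigr)+\cO\bigl(k\lambda^{3k}\bigr).
\end{equation*}
The excursion $\bigl(x^{(k)}_0,y^{(k)}_0\bigr)=T_1\bigl(x^{(k)}_k,y^{(k)}_k\bigr)$, on the other hand, changes $p$ only by $\cO(\lambda^{2k})$: expanding \eqref{eq:T11} with $d_2=0$ and using $x^{(k)}_k=\cO(\lambda^k)$ and $w_k=\cO(\lambda^k)$ gives $x^{(k)}_0=x^*+\cO(\lambda^k)$, $y^{(k)}_0=d_1x^{(k)}_k+\cO(\lambda^{2k})$, and $x^{(k)}_k=p^{(k)}_k/y^{(k)}_k=p^{(k)}_k/y^*+\cO(\lambda^{2k})$, so with $d_1=\tfrac{y^*}{x^*}$,
\begin{equation*}
p^{(k)}_0=x^{(k)}_0y^{(k)}_0=y^*x^{(k)}_k+\cO(\lambda^{2k})=p^{(k)}_k+\cO(\lambda^{2k}).
\end{equation*}
Since these two expressions for $p^{(k)}_k-p^{(k)}_0$ must agree, dividing by $\lambda^{2k}$ yields $(a_1+b_1)\,k\,(x^*y^*)^2\bigl(1+o(1)\bigr)=\cO(1)$; letting $k\to\infty$ through the infinitely many values for which a stable ${\rm SR}_k$-solution exists then forces $a_1+b_1=0$.

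I expect the main obstacle to be the asymptotic bookkeeping underlying the first paragraph: establishing $x^{(k)}_j=\lambda^jx^*(1+o(1))$ and $p^{(k)}_j=\lambda^kx^*y^*(1+o(1))$ \emph{uniformly} in $j$ (so that $\sum_j\bigl(p^{(k)}_j\bigr)^2$ has order exactly $k\lambda^{2k}(x^*y^*)^2$, neither larger nor smaller), and importing from the proof of Theorem~\ref{th:necessaryConditions} the bound $w_k=\cO(\lambda^k)$, which is precisely what collapses every error in the excursion estimate to $\cO(\lambda^{2k})$. The conceptual crux is the computation that the excursion alters $xy$ by only $\cO(\lambda^{2k})$: both hypotheses $d_2=0$ and $d_1=\tfrac{y^*}{x^*}$ enter there, and this is the reason the obstruction involves exactly $a_1+b_1$ — the coefficient of $xy$ in $\det(DT_0)$ — rather than $a_1$ and $b_1$ separately.
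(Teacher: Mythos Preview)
Your argument is correct and takes a genuinely different route from the paper. The paper proceeds by refining the formula for $T_0^k$ to include the first resonant correction, namely $T_0^k(x,y)=\bigl(\lambda^k x(1+ka_1xy+\cdots),\,\lambda^{-k}y(1+kb_1xy+\cdots)\bigr)$, and then matches the fixed-point equation for $\bigl(x^{(k)}_k,y^{(k)}_k\bigr)=\bigl(\lambda^k(x^*+\Phi_k),\,y^*+\Psi_k\bigr)$ to obtain, after eliminating $\Phi_k$, the relation $\lambda^k\Psi_k=(a_1+b_1)x^*y^{*2}k\lambda^{2k}+d_5\Psi_k^2+\cdots$. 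If $a_1+b_1\ne0$ this forces $\Psi_k\sim\zeta\sqrt{k}\,\lambda^k$ for some $\zeta\ne0$, whence $\tau_k\sim2d_5\zeta\sqrt{k}\to\infty$, contradicting stability.

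Your approach instead replaces the second-order expansion of $T_0^k$ by a telescoping sum for the near-invariant $p=xy$: each application of $T_0$ increments $p$ by $(a_1+b_1)p^2+\cO(p^3)$, and periodicity equates the cumulative drift over $k$ local steps with the change across the excursion $T_1$. This is conceptually cleaner --- it makes manifest that the obstruction is exactly the coefficient of $p$ in $\det(DT_0)=1+(a_1+b_1)p+\cdots$, and it sidesteps the derivation of \eqref{eq:T0k4}. The paper's route, by contrast, delivers the quantitative asymptotic $\Psi_k\sim\zeta\sqrt{k}\,\lambda^k$ and identifies $\sqrt{k}$ as the precise divergence rate of $\tau_k$, information your argument does not extract. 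One minor remark: the bound $w_k=\cO(\lambda^k)$ is \emph{not} established in the proof of Theorem~\ref{th:necessaryConditions} (that proof only shows $\Psi_k\to0$ and then argues by contradiction), so it cannot simply be imported; you correctly derive it here from the boundedness of $\tau_k$, and this is also where the hypothesis $d_5\ne0$ enters your argument.
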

Together Theorems~\ref{th:necessaryConditions} and~\ref{th:a1b1} provide four necessary scalar conditions for infinite coexistence.  The next result provides sufficient conditions and tells us there also exists an infinite family of saddle single-round periodic solutions.
\begin{theorem}
Suppose \eqref{eq:N} and \eqref{eq:fourPoints} are satisfied with \eqref{eq:T02} and $x^*, y^* > 0$. Further suppose $\lambda \sigma = 1$, $d_1 = \frac{y^*}{x^*}$, $d_2 = 0$, $a_1 + b_1 = 0$,
$d_5 \ne 0$, $\Delta > 0$ where
\begin{equation}
\Delta = \left( 1 - \frac{c_2 y^*}{x^*} - \frac{d_4 y^*}{d_1} \right)^2 - 4 d_5 \left( d_3 x^{*^2} + c_1 d_1 x^* \right),
\label{eq:discriminant}
\end{equation}
and
\begin{equation}
-1 < \frac{c_2 y^*}{x^*} < 1 - \frac{\sqrt{\Delta}}{2}.
\label{eq:stabilityCondition}
\end{equation}
Then there exists $k_{\rm min} \in \mathbb{Z}$ such that for all $k \ge k_{\rm min}$
$f$ has an asymptotically stable ${\rm SR}_k$-solution and a saddle ${\rm SR}_k$-solution.
\label{th:sufficientConditionsPositiveCase}
\end{theorem}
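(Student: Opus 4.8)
The plan is to turn the statement into a perturbative study of the first-return map $P_k = T_0^k \circ T_1$ near $(0,y^*)$ for large $k$: after a $k$-dependent rescaling of coordinates, $P_k$ converges to an explicit quadratic (H\'enon-like) limiting map whose fixed points are easy to analyse, and the two ${\rm SR}_k$-solutions are the persistent continuations of the two fixed points of the limit. Throughout we fix $\eta>0$ small, exactly as in Theorems~\ref{th:necessaryConditions} and~\ref{th:a1b1}; the solutions constructed will have their $k$ returns satisfying $|x_j y_j| = \cO(\lambda^k)$ and hence lie in $\cN_\eta$ once $k$ is large.

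First I would establish sharp asymptotics, as $k\to\infty$, for $T_0^k(x,y)$ and $DT_0^k(x,y)$, uniformly for $(x,y)$ in a neighbourhood of the relevant part of the homoclinic orbit (i.e.\ $x$ near $x^*$ and $y=\cO(\lambda^k)$, and the symmetric situation for the backward picture). The key point is that $a_1+b_1=0$ forces $\det(DT_0)=1+\cO\big((xy)^2\big)$ from the form~\eqref{eq:T02}, so that along an orbit with $xy=\cO(\lambda^k)$ the accumulated area distortion satisfies $\det(DT_0^k)=1+\cO(k\lambda^{2k})\to 1$; combined with~\eqref{eq:T02} this yields $T_0^k(x,y)=\big(\lambda^k x(1+o(1)),\,\lambda^{-k}y(1+o(1))\big)$ with errors controlled finely enough that the resonance corrections do not affect the leading-order analysis below. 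Much of this machinery, in particular the estimates on $DT_0^k$ that prevent $\mathrm{tr}\,DP_k$ from diverging, is already available from the proof of Theorem~\ref{th:necessaryConditions}; here we use it knowing that all four necessary conditions $d_2=0$, $|\lambda\sigma|=1$, $d_1=\frac{y^*}{x^*}$ and $a_1+b_1=0$ hold, which is precisely what makes the potentially divergent contributions to $\tau_k$ cancel.

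Next I would write the fixed-point equation $P_k(u,v)=(u,v)$, with $(u,v)$ the point of a prospective ${\rm SR}_k$-solution just before the global excursion, so $(x_0,y_0)=T_1(u,v)$ is near $(x^*,0)$. Using~\eqref{eq:T11} with $d_2=0$ together with the $T_0^k$-asymptotics, the first component gives $u=\lambda^k x^*+\cO(\lambda^{2k})$; substituting this into the second component and using $d_1=\frac{y^*}{x^*}$ (so $d_1 x^*=y^*$) collapses the equation for $v-y^*$, and the rescaling $v-y^*=\lambda^k\nu$ (with a companion rescaled variable for the transverse direction) turns $P_k(u,v)=(u,v)$ into an equation for a point $(\xi,\nu)$ whose leading part is a fixed H\'enon-like map, independent of $k$, with Jacobian $-c_2 d_1 = -\tfrac{c_2 y^*}{x^*}$. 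Its fixed points solve a quadratic whose discriminant is a positive multiple of $\Delta$ in~\eqref{eq:discriminant}; hence $\Delta>0$ gives exactly two simple roots $\nu^\pm$, and since $P_k$ (suitably rescaled) converges in $C^1$ to this limit, the implicit function theorem produces, for all $k$ large, two ${\rm SR}_k$-solutions, one near each root.

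Finally I would evaluate the trace $\tau_k$ and determinant $\delta_k$ of $DP_k$ at the two solutions. From Step~1, $\delta_k=\det(DT_0^k)\det(DT_1)\to -c_2 d_1=-\tfrac{c_2 y^*}{x^*}=:\delta_\infty$, and the left inequality in~\eqref{eq:stabilityCondition} puts $\delta_\infty$ strictly in $(-1,1)$. The traces converge to $\tau^\pm=1+\delta_\infty\mp\sqrt{\Delta}$ (the two signs matching the two roots), and the right inequality in~\eqref{eq:stabilityCondition}, which rearranges to $\sqrt{\Delta}<2(1+\delta_\infty)$, says exactly that $(\tau^{-},\delta_\infty)=(1+\delta_\infty-\sqrt\Delta,\delta_\infty)$ lies strictly inside the stability triangle of Fig.~\ref{fig:stability_triangle}, while $(\tau^{+},\delta_\infty)=(1+\delta_\infty+\sqrt\Delta,\delta_\infty)$ has $\tau^{+}>1+\delta_\infty$ and so lies strictly outside it (the characteristic polynomial is negative at $\mu=1$, giving an eigenvalue exceeding $1$, hence a saddle). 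By continuity of eigenvalues, $(\tau_k,\delta_k)\to(\tau^\pm,\delta_\infty)$ means there is $k_{\rm min}$ such that for all $k\ge k_{\rm min}$ one of the two ${\rm SR}_k$-solutions is asymptotically stable and the other is a saddle.

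I expect the main obstacle to be Step~1: obtaining the $T_0^k$ and $DT_0^k$ expansions with errors sharp enough that (a) the quadratic governing the fixed points has discriminant genuinely converging to (a multiple of) $\Delta$ rather than something polluted by resonance terms, and (b) the cancellations keeping $\tau_k$ bounded are manifest, all uniformly in $k$. This is where the hypotheses $a_1+b_1=0$ and the precise normal form~\eqref{eq:T02} enter in an essential quantitative way, and it is the step most likely to rely on the careful bookkeeping already developed for Theorem~\ref{th:necessaryConditions}; by comparison, the existence of the two solutions and the verification of the stable/saddle dichotomy are then routine consequences of $\Delta>0$ and~\eqref{eq:stabilityCondition} via the implicit function theorem.
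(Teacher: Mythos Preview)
Your proposal is correct and follows essentially the same route as the paper. The paper proceeds by writing $\left(x^{(k)}_k,y^{(k)}_k\right)=\big(\lambda^k(x^*+\phi_k\lambda^k+\cO(\lambda^{2k})),\,y^*+\psi_k\lambda^k+\cO(\lambda^{2k})\big)$, using the expansion $T_0^k(x,y)=\big(\lambda^kx(1+ka_1xy+\cO(k^2x^2y^2)),\,\lambda^{-k}y(1+kb_1xy+\cO(k^2x^2y^2))\big)$ from the proof of Theorem~\ref{th:a1b1}, matching against \eqref{eq:T11} to obtain a pair of equations for $(\phi_k,\psi_k)$ in which the $k$-dependent terms cancel precisely because $a_1+b_1=0$, solving the resulting quadratic for $\psi_k^\pm$ (discriminant exactly $\Delta$), and then computing $\lim_{k\to\infty}\tau_k=1-\tfrac{c_2y^*}{x^*}\pm\sqrt{\Delta}$ and $\lim_{k\to\infty}\delta_k=-\tfrac{c_2y^*}{x^*}$ via Lemma~\ref{le:PjBoundsl}. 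Your H\'enon-limit/implicit-function-theorem framing is the same computation dressed in slightly more conceptual language; the rescaling $v-y^*=\lambda^k\nu$ is exactly the paper's ansatz $\psi_k$, and your ``Step~1 obstacle'' is resolved in the paper simply by quoting \eqref{eq:T0k4} and Lemma~\ref{le:PjBoundsl}. Two small cosmetic points: your labelling $\tau^\pm=1+\delta_\infty\mp\sqrt{\Delta}$ is inconsistent with the subsequent identification of $\tau^-$ (just a sign swap in naming), and the claim that the left inequality of \eqref{eq:stabilityCondition} alone gives $\delta_\infty\in(-1,1)$ is not quite right---you need the left inequality for $\delta_\infty<1$ and the right for $\delta_\infty>-1$.
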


From the proof of Theorem~\ref{th:sufficientConditionsPositiveCase} (given below in \S\ref{sec:positiveCase}) it can be seen that the condition $\Delta > 0$ ensures the existence of two ${\rm SR}_k$-solutions for large values of $k$, while~\eqref{eq:stabilityCondition} ensures that one of these is asymptotically stable and the other is a saddle.

\subsection{The orientation-reversing case}
\label{sec:sec2p4}
We now suppose $\lambda \sigma = -1$, so $f$ is orientation-reversing at the origin. In this case $\lambda$ and $\sigma$ are again resonant but $T_0$ can be reduced further than in the $\lambda \sigma = 1$ case.
Specifically we may assume
\begin{equation}
T_0(x,y) = \begin{bmatrix} \lambda x \left( 1 + \cO \left( x^2 y^2 \right) \right) \\
-\tfrac{1}{\lambda} y \left( 1 + \cO \left( x^2 y^2 \right) \right) \end{bmatrix}.
\label{eq:T0rev}
\end{equation}
Analogous to Theorem~\ref{th:sufficientConditionsPositiveCase}, the following result provides sufficient conditions for infinite coexistence.  We see that again it is only required that $\Delta > 0$ and (13) is satisfied. However, here the periodic solutions only exist for either even values of $k$, or odd values of $k$, as determined by the sign of $d_1$.
\begin{theorem}
Suppose \eqref{eq:N} and \eqref{eq:fourPoints} are satisfied with \eqref{eq:T0rev} and $x^*, y^* > 0$. Further suppose $\lambda \sigma = -1$, $|d_1| = \frac{y^*}{x^*}$, $d_2 = 0$,
$d_5 \ne 0$, $\Delta > 0$ (where $\Delta$ is given by \eqref{eq:discriminant})
and \eqref{eq:stabilityCondition} is satisfied.
Then there exists $k_{\rm min} \in \mathbb{Z}$ such that
for all even $k \ge k_{\rm min}$ in the case $d_1 = \frac{y^*}{x^*}$,
and all odd $k \ge k_{\rm min}$ in the case $d_1 = -\frac{y^*}{x^*}$,
$f$ has an asymptotically stable ${\rm SR}_k$-solution
and a saddle ${\rm SR}_k$-solution.
\label{th:sufficientConditionsNegativeCase}
\end{theorem}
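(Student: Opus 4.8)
I would prove Theorem~\ref{th:sufficientConditionsNegativeCase} by adapting the proof of Theorem~\ref{th:sufficientConditionsPositiveCase}, carrying along the sign factors produced by $(\lambda\sigma)^k = (-1)^k$. As there, an ${\rm SR}_k$-solution corresponds to a fixed point $\left(x_k, y_k\right)$ of $T_0^k \circ T_1$ lying near $(0,y^*)$, with $T_1\left(x_k, y_k\right)$ near $(x^*,0)$, and one studies such solutions as $k \to \infty$. First I would record the estimates on $T_0^k$ that are needed. From \eqref{eq:T0rev} the product $x y$ is multiplied by $1 + \cO\!\left((xy)^2\right)$ under each application of $T_0$, so along an orbit that stays in $\cN_\eta$ with $|x_0 y_0| = \cO\!\left(|\lambda|^k\right)$ (the relevant regime here) one obtains
\[
T_0^k(x,y) = \Big( \lambda^k x \big( 1 + \cO(k \lambda^{2k}) \big),\; \sigma^k y \big( 1 + \cO(k \lambda^{2k}) \big) \Big),
\]
together with matching bounds on $D T_0^k$. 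It is here that the orientation-reversing normal form is used: because the leading resonance term in \eqref{eq:T0rev} is $\cO(x^2 y^2)$ rather than the $\cO(xy)$ available in \eqref{eq:T02}, there is no analogue of the quantity $a_1 + b_1$, and these relative errors are genuinely $o(1)$.

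Next I would reduce the fixed-point equations to a scalar equation. Writing $\left(x_k, y_k\right) = \left(u, y^* + w\right)$ with $u$ exponentially small in $k$, the $x$-component of the fixed-point equation together with $u = \lambda^k\big(\cdots\big)$ can be solved by contraction, leaving a single equation for $w$. Substituting $d_2 = 0$ and $\sigma^k = (-1)^k \lambda^{-k}$ into the $y$-component of \eqref{eq:T11} and rescaling $w = \lambda^k W$, one finds after dividing by $\lambda^k$ that the constant term carries a factor $\big((-1)^k d_1 x^* - y^*\big)\lambda^{-k}$; since this is unbounded, the equation admits a solution with $W$ bounded only if $(-1)^k d_1 x^* = y^*$, which by $|d_1| = \tfrac{y^*}{x^*}$ is exactly the stated parity restriction ($k$ even when $d_1 = \tfrac{y^*}{x^*}$, $k$ odd when $d_1 = -\tfrac{y^*}{x^*}$). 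Imposing the correct parity, the scalar equation reduces to
\[
d_5 W^2 - (-1)^k \!\left( 1 - \frac{c_2 y^*}{x^*} - \frac{d_4 y^*}{d_1} \right) W + \left( d_3 x^{*2} + c_1 d_1 x^* \right) = o(1),
\]
whose discriminant is precisely the quantity $\Delta$ of \eqref{eq:discriminant} --- here the identity $(-1)^k d_4 x^* = \tfrac{d_4 y^*}{d_1}$, valid because $(d_1 x^*)^2 = (y^*)^2$, is used to match coefficients. Since $\Delta > 0$ this quadratic has two simple real roots $W_\pm$, and the implicit function theorem then yields, for each sufficiently large $k$ of the admissible parity, two ${\rm SR}_k$-solutions with $W \to W_\pm$.

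Then I would compute the trace $\tau_k$ and determinant $\delta_k$ of $D\!\left(T_0^k \circ T_1\right)$ at these points. The determinant factorises as $\det\!\left(D T_0^k\right)\det\!\left(D T_1\right) = (-1)^k(1 + o(1))\big(c_1 d_2 - c_2 d_1 + o(1)\big)$, which with $d_2 = 0$ and the parity condition gives $\delta_k \to -\tfrac{c_2 y^*}{x^*}$. For the trace, the key point is that in the product $D T_0^k \cdot D T_1$ the only large entry of $D T_0^k$ is the $(2,2)$ entry, $\approx \sigma^k$, while the $(2,2)$ entry of $D T_1$ at $\left(u, y^* + w\right)$ is $d_4 u + 2 d_5 w + \cO(\lambda^{2k}) \approx \lambda^k(d_4 x^* + 2 d_5 W)$; their product is $\cO(1)$, and I would show the three remaining contributions to $\tau_k$ are $o(1)$. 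Eliminating $d_5 W^2$ via the quadratic then gives $\tau_k \to 1 - \tfrac{c_2 y^*}{x^*} \pm \sqrt{\Delta}$ for the two solutions. Comparing $\left(\tau_k, \delta_k\right)$ with the stability triangle of Fig.~\ref{fig:stability_triangle}, condition \eqref{eq:stabilityCondition} is exactly what places $\big(1 - \tfrac{c_2 y^*}{x^*} - \sqrt{\Delta},\, -\tfrac{c_2 y^*}{x^*}\big)$ in the interior (asymptotic stability) and $\big(1 - \tfrac{c_2 y^*}{x^*} + \sqrt{\Delta},\, -\tfrac{c_2 y^*}{x^*}\big)$ strictly outside its closure; since also $|\delta_k| < 1$, the latter solution has real eigenvalues $|\nu_1| < 1 < |\nu_2|$, i.e.\ is a saddle. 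Choosing $k_{\rm min}$ so large that all these asymptotic statements are effective completes the argument.

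The main obstacle is the estimate on $D T_0^k$ underlying the trace computation: one must show that the off-diagonal entries of $D T_0^k$ and the deviation of its $(2,2)$ entry from $\sigma^k$ contribute only $o(1)$ to $\tau_k$. This requires a transfer-matrix-type bound along the orbit from near $(x^*,0)$ to near $(0,y^*)$, exploiting that the $(2,1)$ entry of $D T_0$ is largest near $(0,y^*)$ whereas its $(1,2)$ entry is largest near $(x^*,0)$, so that any path that switches coordinates picks up a compensating power of $|\lambda|^k$. The remainder is bookkeeping: checking that wherever the proof of Theorem~\ref{th:sufficientConditionsPositiveCase} uses $d_1 x^* = y^*$ or $a_1 + b_1 = 0$, the argument here goes through with $(-1)^k d_1 x^* = y^*$ and the stronger normal form \eqref{eq:T0rev}, and that the residual factors of $(-1)^k$ cancel in pairs so that $\Delta$, \eqref{eq:stabilityCondition}, and the limiting values of $\tau_k$ and $\delta_k$ are identical to those in the orientation-preserving case.
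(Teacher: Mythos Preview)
Your proposal is correct and follows essentially the same route as the paper's proof: both impose the parity condition $(-1)^k d_1 x^* = y^*$ to eliminate the divergent leading term, reduce the fixed-point problem to a quadratic in the rescaled $y$-deviation whose discriminant is $\Delta$, and then compute $\lim_{k\to\infty}\tau_k = 1 - \tfrac{c_2 y^*}{x^*} \pm \sqrt{\Delta}$ and $\lim_{k\to\infty}\delta_k = -\tfrac{c_2 y^*}{x^*}$ to invoke the stability-triangle criteria. The paper is terser (it simply cites the analogue of \eqref{eq:T0k4} with $a_1=b_1=0$ and Lemma~\ref{le:PjBoundsl} for the $DT_0^k$ bounds), whereas you spell out the transfer-matrix reasoning and use the implicit function theorem in place of direct coefficient matching, but these are presentational rather than substantive differences.
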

\section{Explicit examples of infinite coexistence}
\label{sec:example}

\begin{figure}[b!]
\begin{center}
\includegraphics{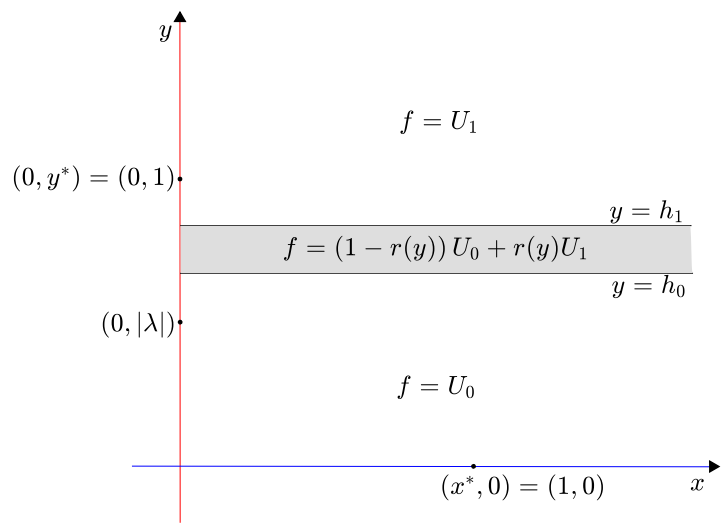}
\caption{The basic structure of the phase space of the map \eqref{eq:fEx}.}
\label{fig:smooth_example}
\end{center}
\end{figure}

Here we demonstrate Theorems \ref{th:sufficientConditionsPositiveCase} and \ref{th:sufficientConditionsNegativeCase}
with piecewise-smooth maps of the form
\begin{equation}
f(x,y) = \begin{cases}
U_0(x,y), & y \le h_0 \,, \\
(1 - r(y)) U_0(x,y) + r(y) U_1(x,y), & h_0 \le y \le h_1 \,, \\
U_1(x,y), & y \ge h_1 \,,
\end{cases}
\label{eq:fEx}
\end{equation}
where
\begin{align}
h_0 &= \frac{2 |\lambda| + 1}{3}, &
h_1 &= \frac{|\lambda| + 2}{3}.
\label{eq:xi0xi1}
\end{align}
This family of maps was motivated by similar families exhibiting infinite coexistence introduced in \cite{GaTr83}. The piecewise nature of \eqref{eq:fEx} is illustrated in Fig.~\ref{fig:smooth_example}.

We let
\begin{equation}
U_0(x,y) = \begin{bmatrix}
\lambda x \\
\sigma y
\end{bmatrix},
\label{eq:U0}
\end{equation}
so that $T_0 = U_0$, and
\begin{equation}
U_1(x,y) = \begin{bmatrix}
1 + c_2 (y - 1) \\
d_1 x + d_5 (y - 1)^2
\end{bmatrix},
\label{eq:U1}
\end{equation}
so that $T_1 = f^m = U_1$ using $m = 1$ and $x^* = y^* = 1$.
Since \eqref{eq:U1} neglects some terms in \eqref{eq:T11},
equation \eqref{eq:discriminant} reduces to $\Delta = (1 - c_2)^2$
and \eqref{eq:stabilityCondition} reduces to $|c_2| < 1$.
Notice that with \eqref{eq:xi0xi1}, equation \eqref{eq:fourPoints} is satisfied
and $(0,y^*) = (0,1)$ lies in the region $y \ge h_1$.

One could choose the function $r$ in \eqref{eq:fEx} such that $f$ is $C^\infty$ on the switching manifolds $y=h_0$ and $y=h_1$, and thus $C^\infty$ on $\mathbb{R}^2$. However, the nature of $f$ outside a neighbourhood of the homoclinic orbit is not important to Theorems \ref{th:sufficientConditionsPositiveCase} and \ref{th:sufficientConditionsNegativeCase} because the ${\rm SR}_k$-solutions converge to the homoclinic orbit, so for simplicity we choose $r$ such that $f$ is $C^1$. This requires
$r(h_0) = 0$,
$r'(h_0) = 0$,
$r(h_1) = 1$, and
$r'(h_1) = 0$.
The unique cubic polynomial satisfying these requirements is
\begin{equation}
r(y) = s \left( \frac{y - h_0}{h_1 - h_0} \right),
\label{eq:r}
\end{equation}
where
\begin{equation}
s(z) = 3 z^2 - 2 z^3,
\label{eq:s}
\end{equation}
see Fig.~\ref{fig:smooth_ry}.
\begin{figure}[h!]
\begin{center}
\includegraphics{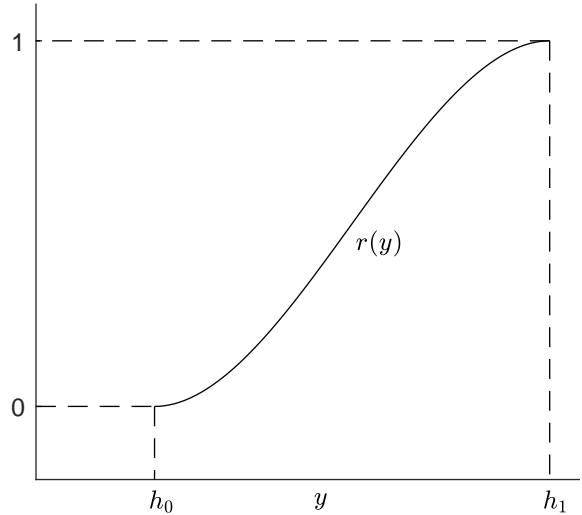}
\caption{The function \eqref{eq:r} (with \eqref{eq:s}) that we use as a convex combination parameter in \eqref{eq:fEx}.}
\label{fig:smooth_ry}
\end{center}
\end{figure}
As examples we fix
\begin{align}
c_2 &= -\tfrac{1}{2}, & d_5 &= 1,
\label{eq:c2d5}
\end{align}
and consider the following four combinations for the remaining parameter values:
\begin{align}
\lambda &= \tfrac{4}{5}, & \sigma &= \tfrac{5}{4}, & d_1 &= 1, \label{eq:param1} \\
\lambda &= -\tfrac{4}{5}, & \sigma &= -\tfrac{5}{4}, & d_1 &= 1, \label{eq:param2} \\
\lambda &= \tfrac{4}{5}, & \sigma &= -\tfrac{5}{4}, & d_1 &= 1, \label{eq:param3} \\
\lambda &= -\tfrac{4}{5}, & \sigma &= \tfrac{5}{4}, & d_1 &= -1. \label{eq:param4}
\end{align}
Fig.~\ref{fig:HT} shows the stable and unstable manifolds of the origin for all four combinations \eqref{eq:param1}--\eqref{eq:param4}. Specifically we have followed the unstable manifold upwards from the origin until observing three
tangential intersections with the stable manifold on the $x$-axis
(some transversal intersections are also visible).
In panels (b) and (c) the unstable manifold evolves outwards along both the positive and negative $y$-axes
because here the unstable eigenvalue $\sigma$ is negative.
In some places the unstable manifolds have extremely high curvature due to the high degree of
nonlinearity of~\eqref{eq:fEx} in the region $h_0 \le y \le h_1$.
Also notice the unstable manifolds have self-intersections because \eqref{eq:fEx} is non-invertible
(for instance $f(0,1) = f(\frac{1}{\lambda},0) = (1,0)$) \cite{CM96}.

With \eqref{eq:param1} and \eqref{eq:param2} the map satisfies the conditions of Theorem \ref{th:sufficientConditionsPositiveCase}, while with \eqref{eq:param3} and \eqref{eq:param4} the map satisfies the conditions of Theorem \ref{th:sufficientConditionsNegativeCase}. Thus in each case \eqref{eq:fEx} has infinitely many asymptotically stable ${\rm SR}_k$-solutions
and these are shown in Fig.~\ref{fig:SRP} up to $k = 15$ (each has period $n = k+1$).
The ${\rm SR}_k$-solutions converge to the homoclinic orbit that includes the points $(0,y^{*})$ and $(x^*,0)$
and occupy only certain quadrants of the $(x,y)$-plane as determined by the signs of $\lambda$ and $\sigma$.
In each case the ${\rm SR}_k$-solutions exist and are stable for relatively low values of $k$
(in fact for all possible $k \ge 0$).
This is due to the simplicity of~\eqref{eq:fEx}; to be clear Theorems \ref{th:sufficientConditionsPositiveCase} and \ref{th:sufficientConditionsNegativeCase} only
tell us about ${\rm SR}_k$-solutions for sufficiently large values of $k$.
In panel (c) the ${\rm SR}_k$-solutions exist for even values of $k$
while in panel (d) they exist for odd values of $k$,
in accordance with Theorem \ref{th:sufficientConditionsNegativeCase}.

Theorems \ref{th:sufficientConditionsPositiveCase} and \ref{th:sufficientConditionsNegativeCase} also guarantee the existence of saddle ${\rm SR}_k$-solutions.
For each of our four cases we show one such solution in Fig.~\ref{fig:SRP}
for the largest value of $k \le 15$. These saddle solutions were straight-forward to compute numerically because
they each involve one point in $y \ge h_1$ and all other points in $y \le h_0$
so from \eqref{eq:U0} and \eqref{eq:U1} their computation reduces to solving a quadratic equation.
For smaller periods some saddle solutions involve points in $h_0 < y < h_1$.
Numerical root-finding methods are needed to compute them and this is beyond the scope of the present paper.
Also panels (b) and (c) each contain an asymptotically stable
double-round periodic solution of period $16$.

Finally in Fig.~\ref{fig:Basin} we show basins of attraction.
These were computed numerically by iterating \eqref{eq:fEx} from a $1000 \times 1000$ grid of initial points.
Each point in the grid is coloured by that of the periodic solution in Fig.~\ref{fig:SRP} to which its forward orbit converges.
If the orbit appeared to converge to some other attractor it is coloured black;
if it appeared to diverge it is coloured white.
In each case the basins are highly intermingled.
We have also observed that many of the boundaries of the basins coincide with the stable manifolds
of the saddle ${\rm SR}_k$-solutions of Theorems \ref{th:sufficientConditionsPositiveCase} and \ref{th:sufficientConditionsNegativeCase}.
\begin{figure}[!htbp]
	\begin{tabular}{c c }

		\hspace{-0.5cm}\includegraphics[width=0.55\textwidth]{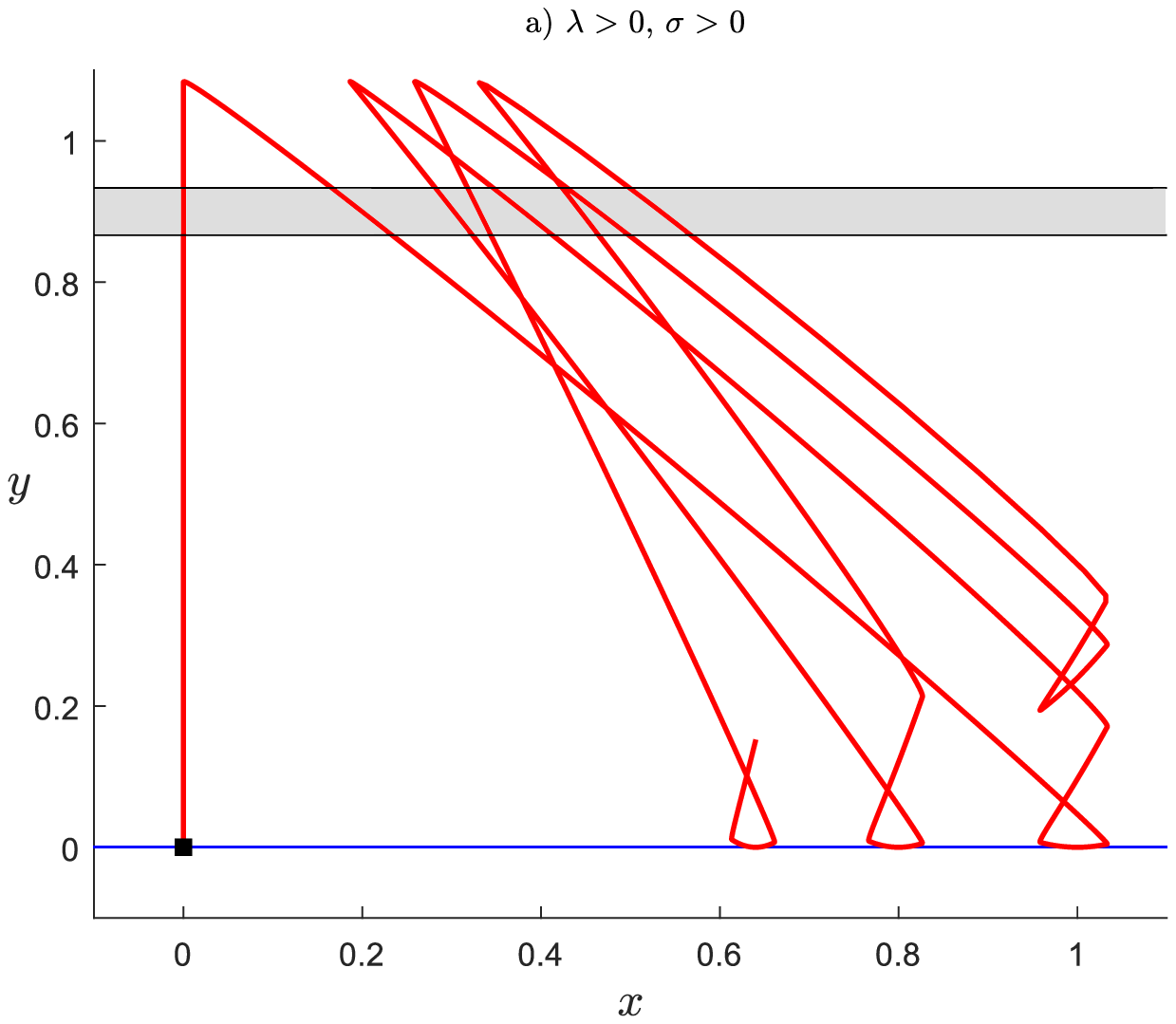}&
		\hspace{-1.3cm}\includegraphics[width=0.55\textwidth]{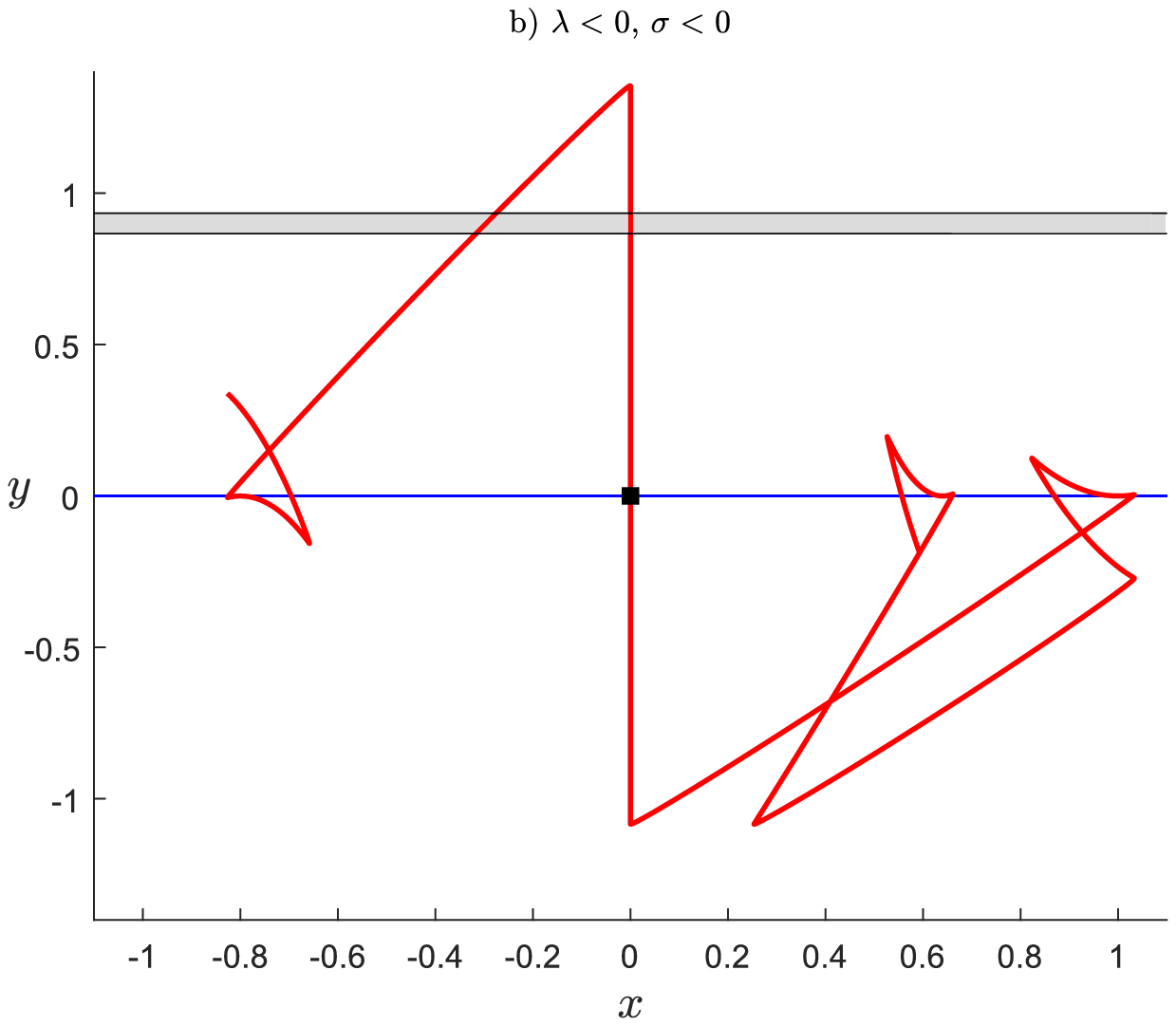}\\
		\hspace{-0.5cm}\includegraphics[width=0.55\textwidth]{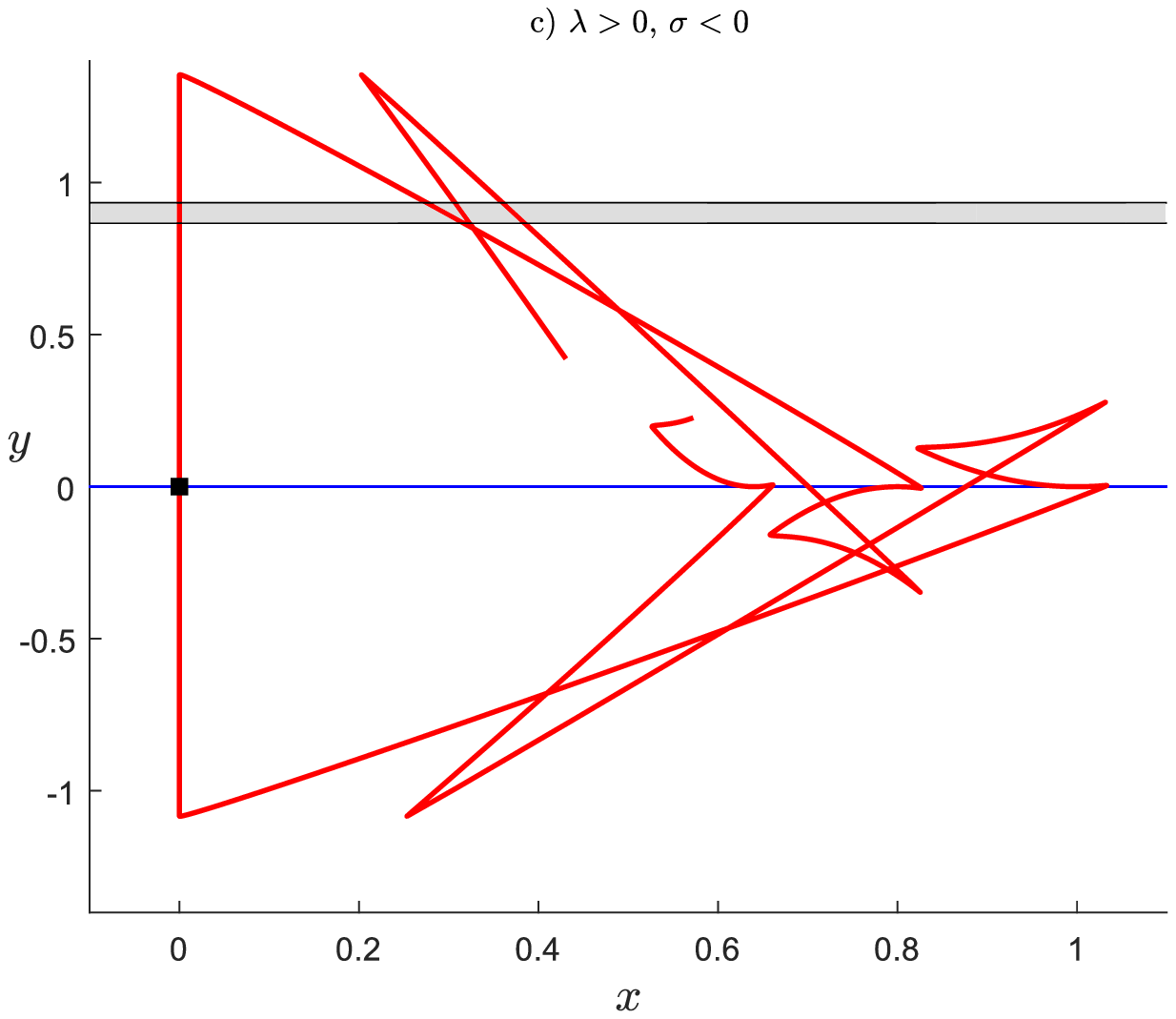}&
		\hspace{-1.3cm}\includegraphics[width=0.55\textwidth]{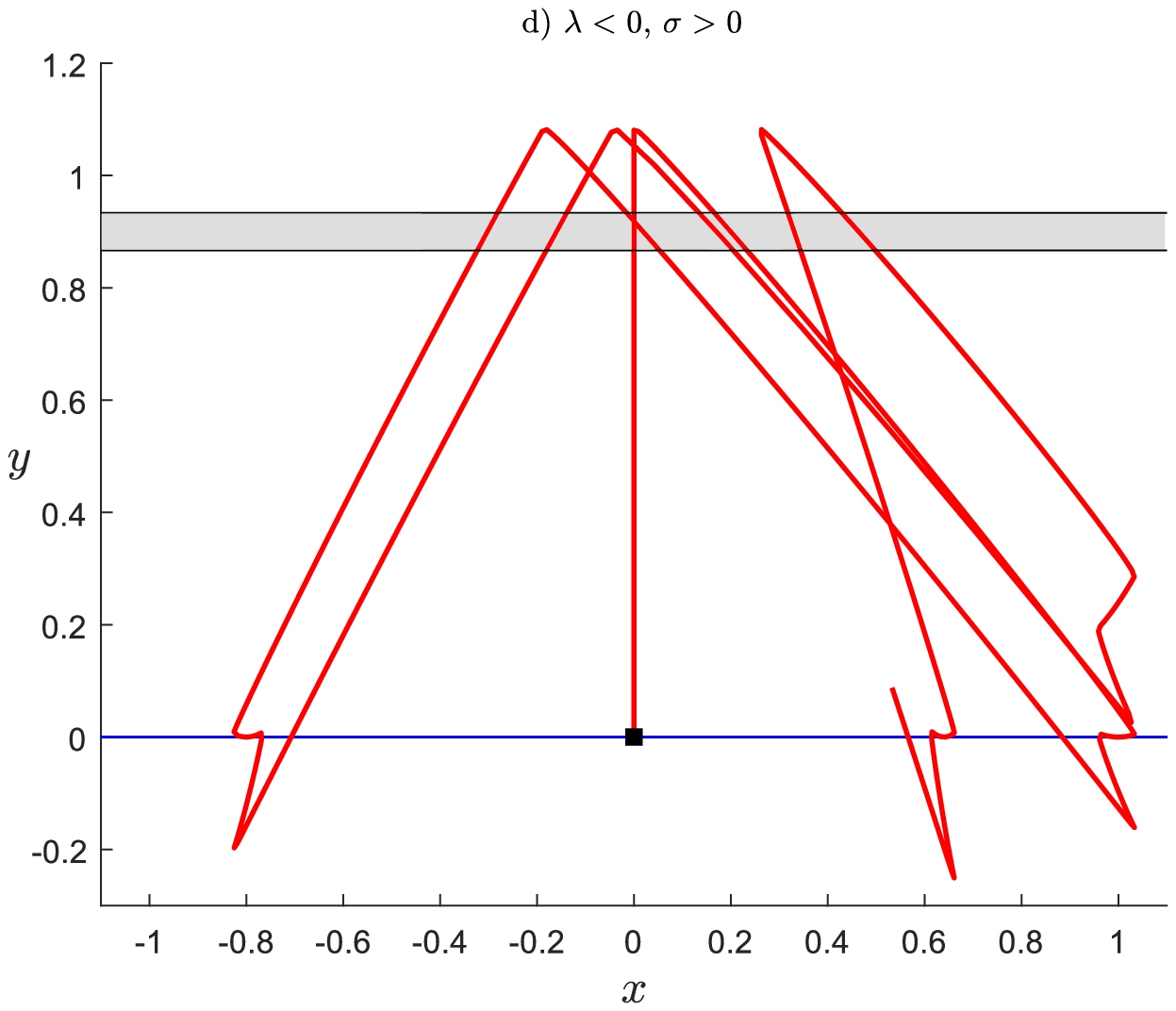}\\
		
	\end{tabular}
	\caption{Parts of the stable [blue] and unstable [red] manifolds of the origin for the map \eqref{eq:fEx} with \eqref{eq:xi0xi1}--\eqref{eq:c2d5}.
Panels (a)--(d) correspond to \eqref{eq:param1}--\eqref{eq:param4} respectively.
In each panel the region $h_0 < y < h_1$ is shaded.
}
	\label{fig:HT}
\end{figure}

\begin{figure}[!htbp]
	\begin{tabular}{c c }

		\hspace{-0.3cm}\includegraphics[width=0.5\textwidth]{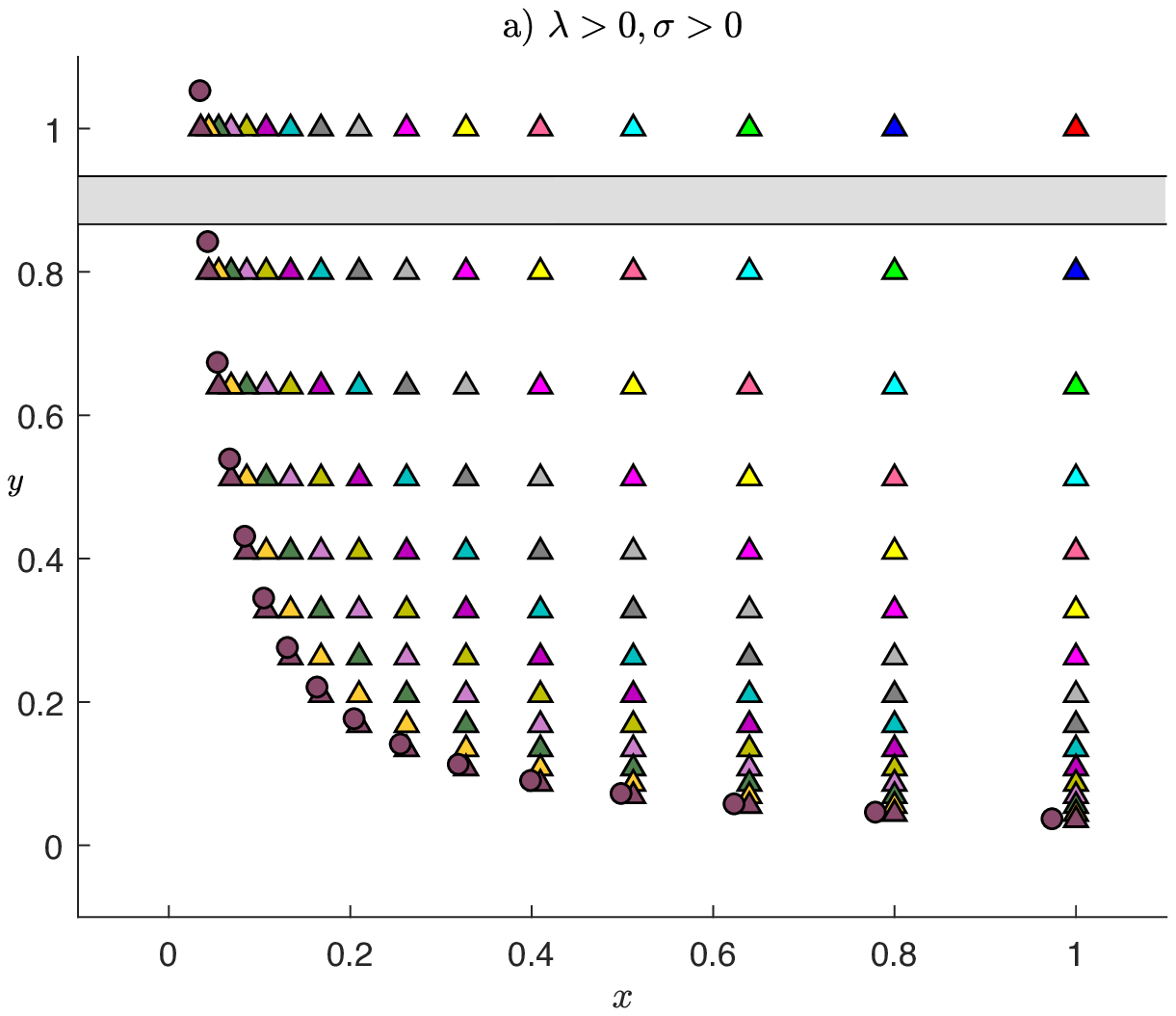}&
		
		\hspace{-0.1cm}\includegraphics[width=0.5\textwidth]{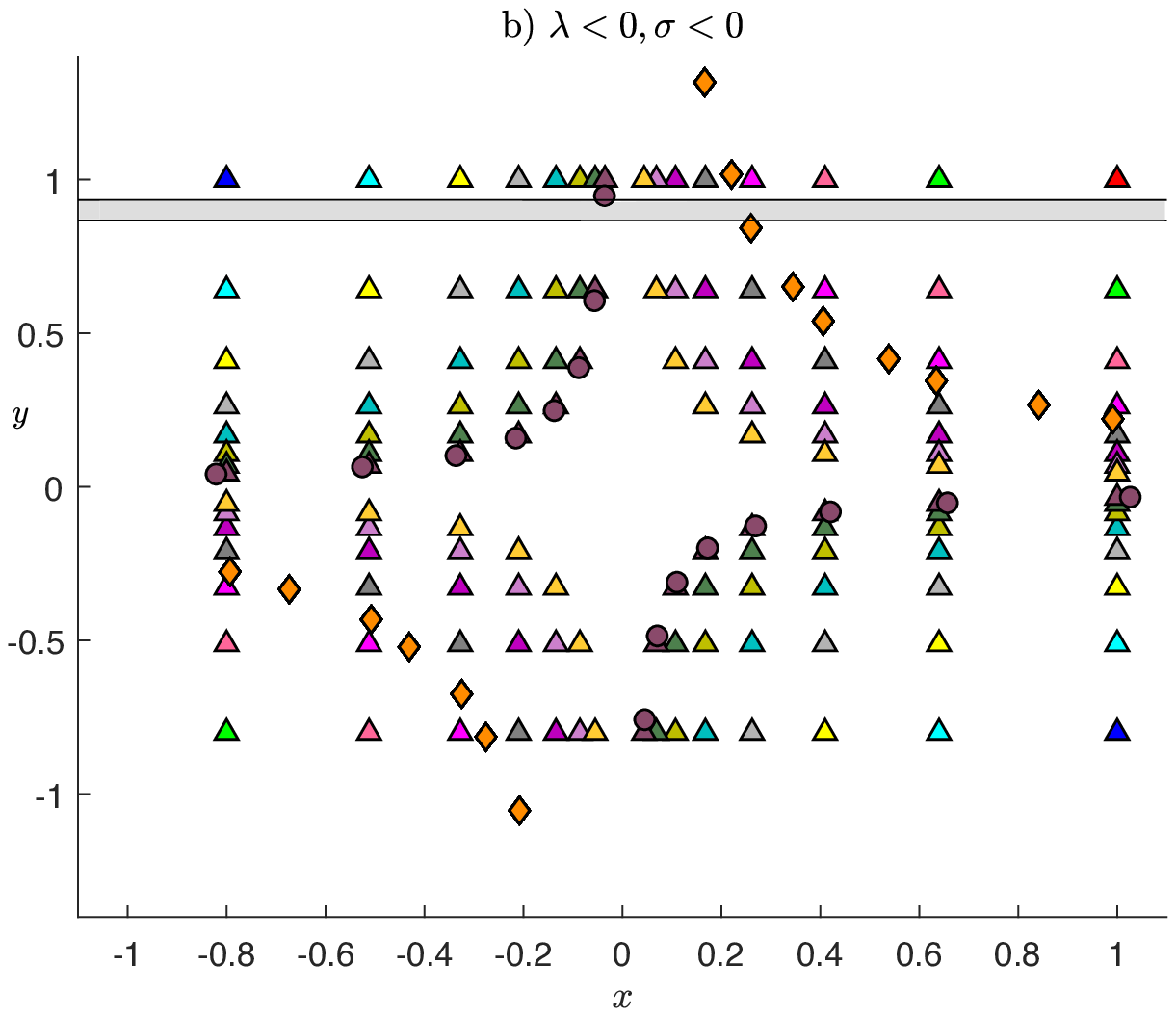}\\

		\hspace{-0.3cm}\includegraphics[width=0.5\textwidth]{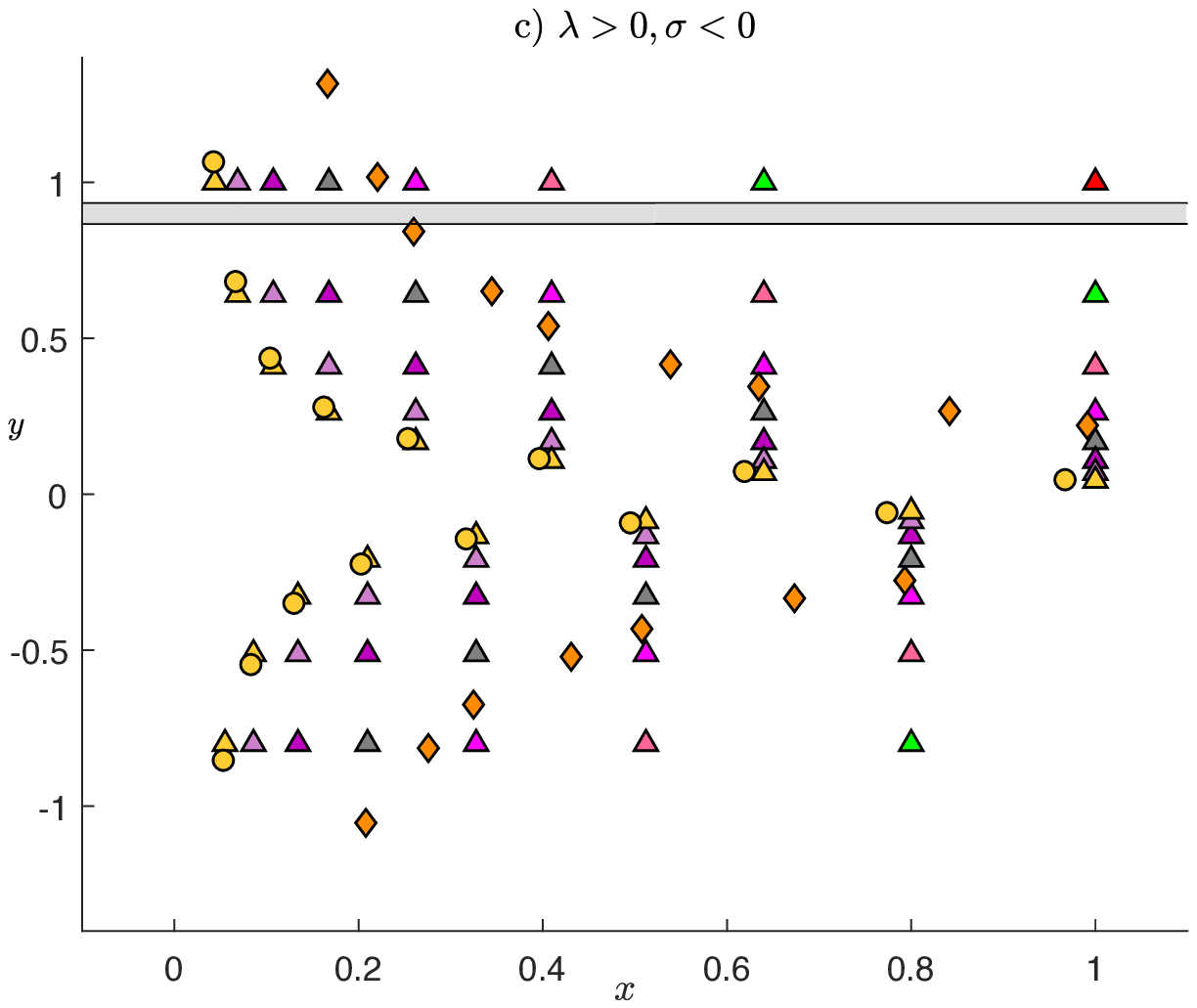}&
	
		\hspace{-0.1cm}\includegraphics[width=0.5\textwidth]{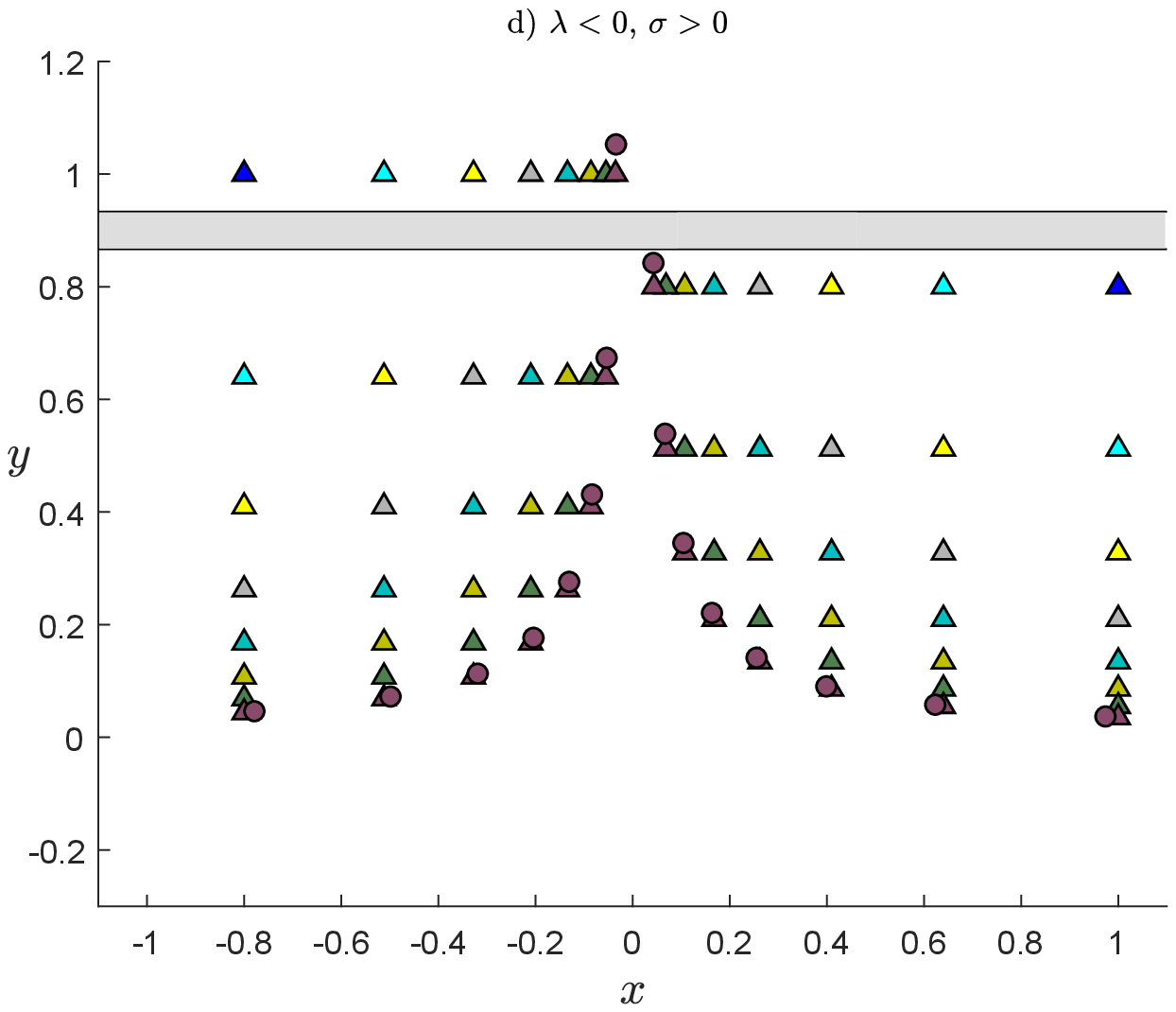}\\
		
	\end{tabular}
	\hspace*{5.5cm}\includegraphics{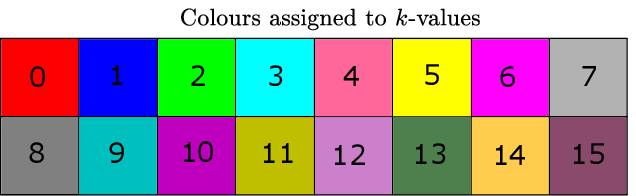}
	\vspace{0.3cm}\caption{Asymptotically stable ${\rm SR}_k$-solutions of \eqref{eq:fEx} with \eqref{eq:xi0xi1}--\eqref{eq:c2d5}.
Panels (a)--(d) correspond to \eqref{eq:param1}--\eqref{eq:param4} respectively.
Points of the stable ${\rm SR}_k$-solutions are indicated by triangles
and coloured by the value of $k$ (as indicated in the key).
In panels (a) and (b) the solutions are shown for $k=0$ (a fixed point in $y > h_1$) up to $k = 15$.
In panel (c) the solutions are shown for $k = 0,2,4,\ldots,14$
and in panel (d) the solutions are shown for $k = 1,3,5,\ldots,15$.
In each panel one saddle ${\rm SR}_k$-solution is shown with circles
(with $k = 14$ in panel (c) and $k = 15$ in the other panels).
In panels (b) and (c) asymptotically stable double-round periodic solutions are shown with diamonds.
} 
	\label{fig:SRP}
\end{figure}
\begin{figure}[!htbp]
	\begin{tabular}{c c}
					
		\hspace{-0.5cm}\includegraphics[width=0.55\textwidth]{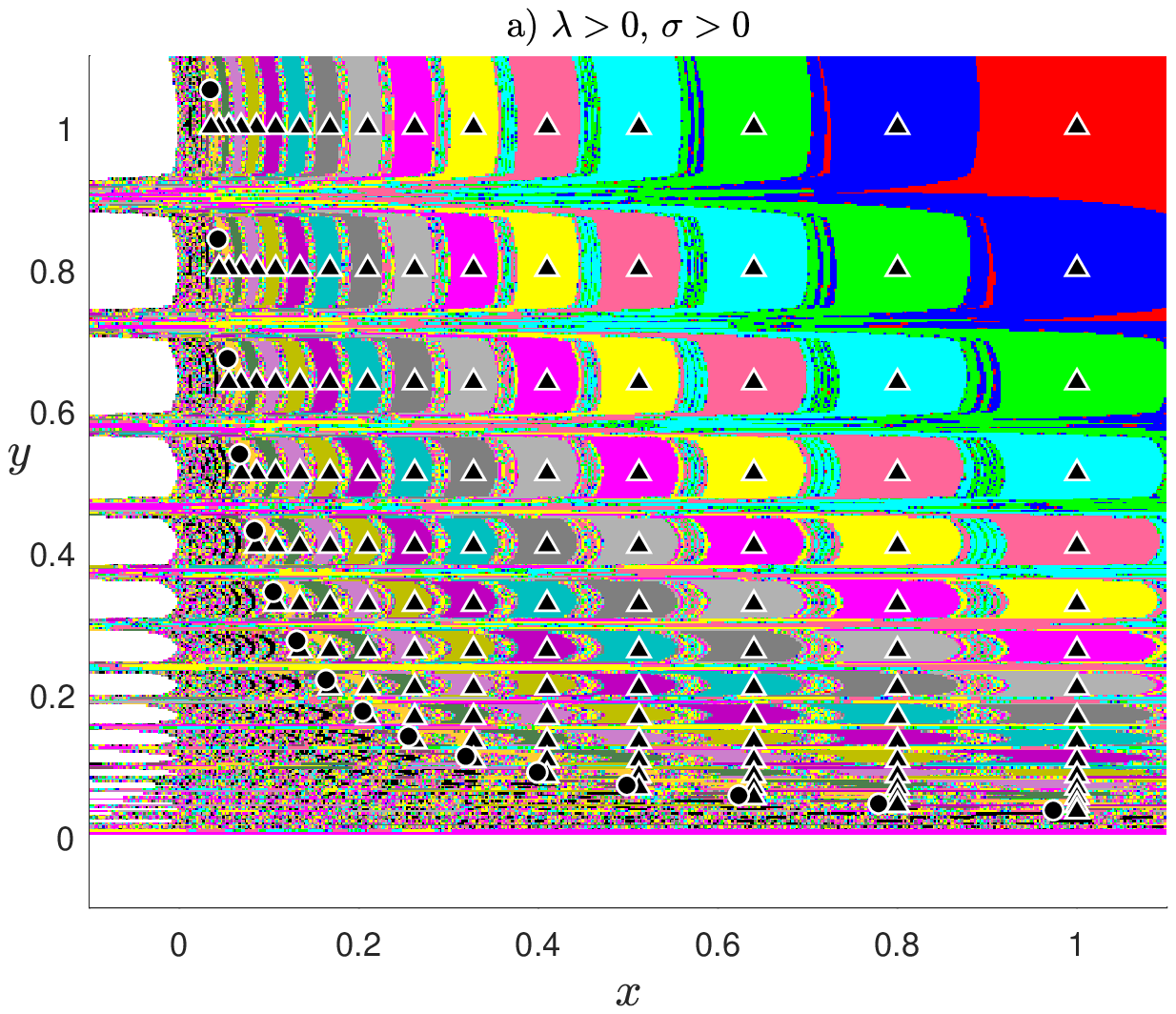}&
		
		\hspace{-1.3cm}\includegraphics[width=0.55\textwidth]{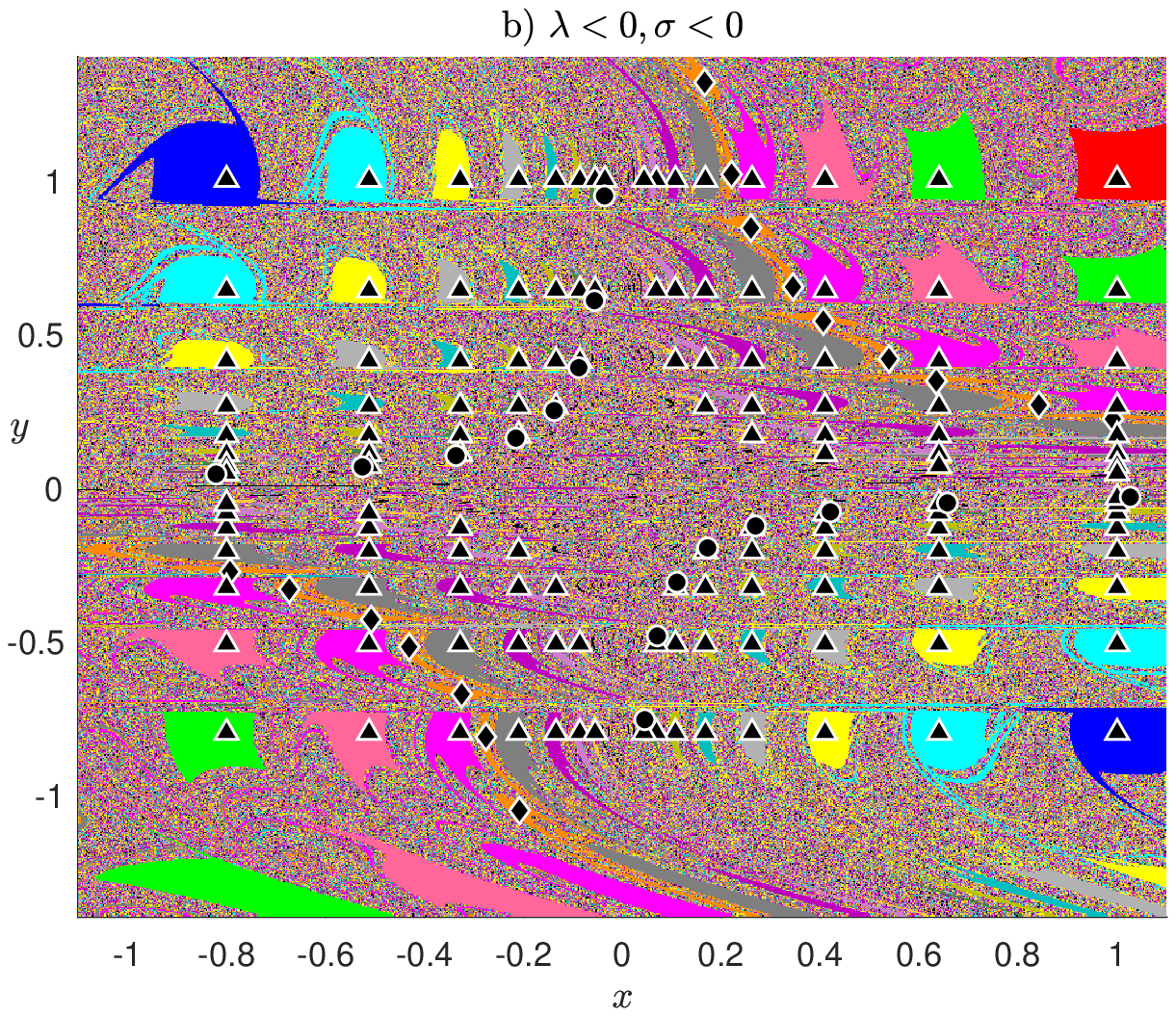}\\
	
		\hspace{-0.5cm}\includegraphics[width=0.55\textwidth]{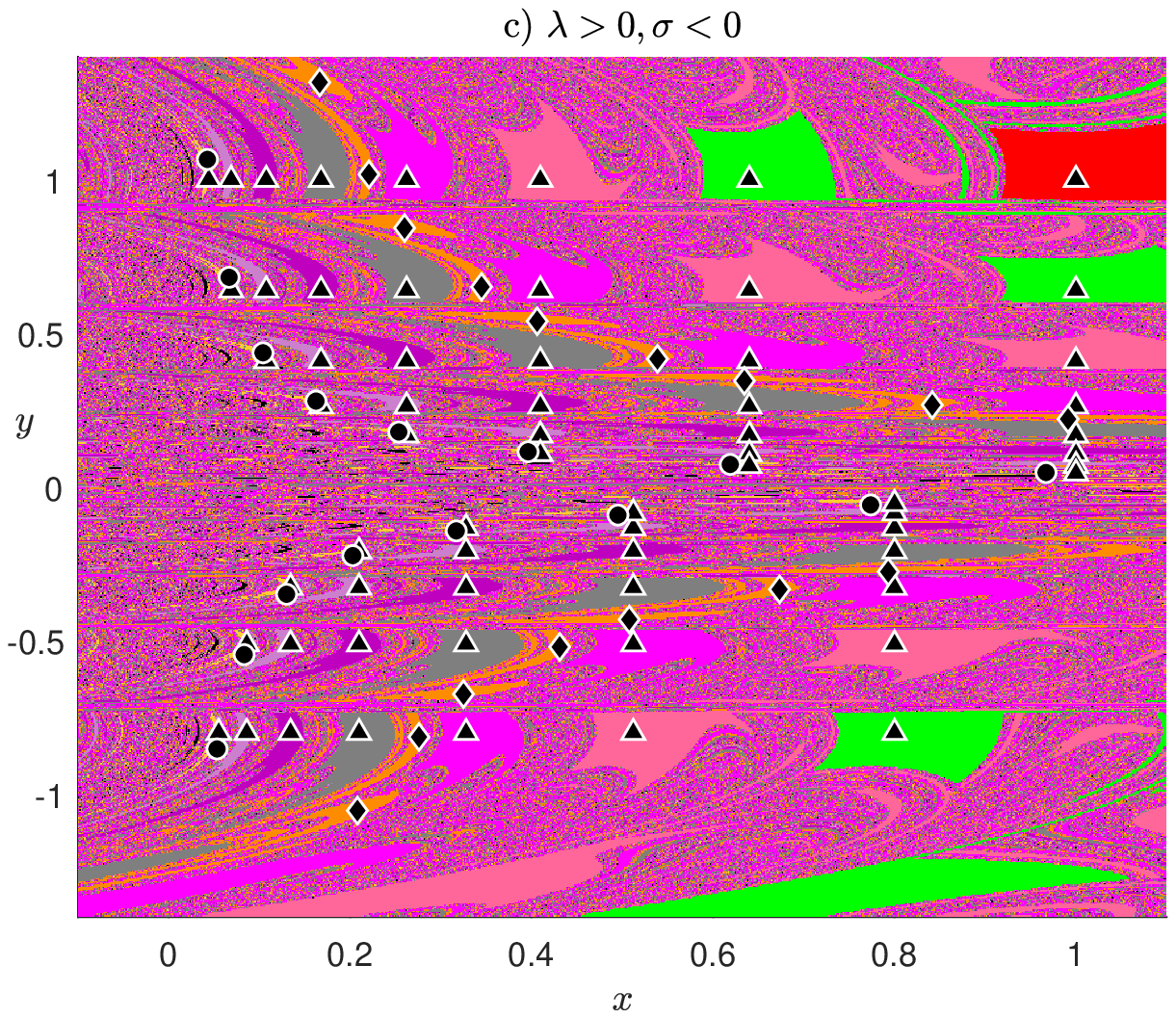}&
		\hspace{-1.3cm}\includegraphics[width=0.55\textwidth]{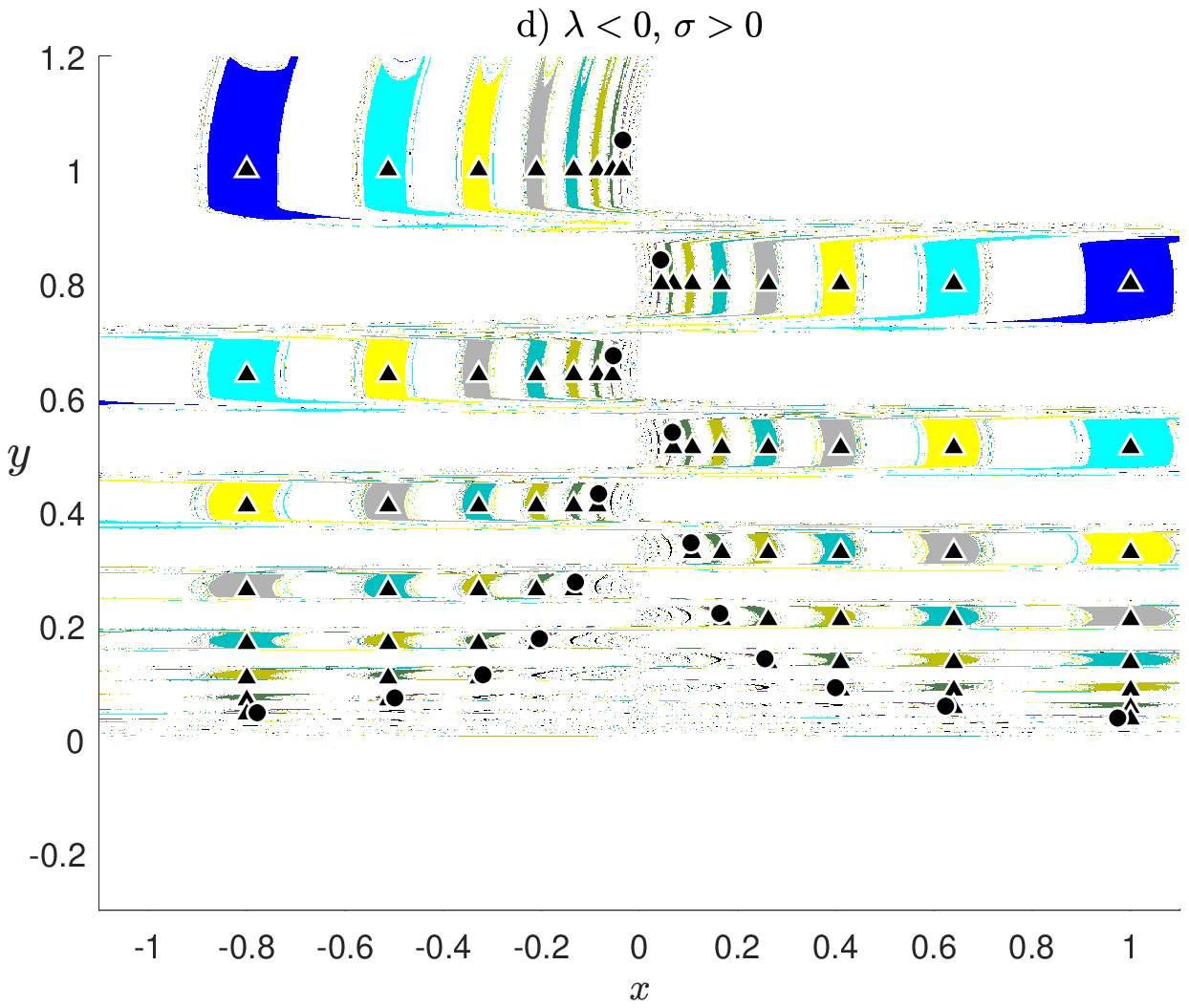}\\
		
	\end{tabular}
    \hspace*{5.5cm}\includegraphics{colour-legend-periodic.eps}
	\vspace*{0.3cm}\caption{Basins of attraction for the asymptotically stable ${\rm SR}_k$-solutions shown in Fig \ref{fig:SRP}. Specifically each point in a $1000 \times 1000$ grid is coloured by that of the ${\rm SR}_k$-solution to which its forward orbit under $f$ converges to.} 
	\label{fig:Basin}
\end{figure}

\section{Derivation of necessary conditions for infinite coexistence}
\label{sec:necessaryConditions}
In this section we work towards a proof of Theorem \ref{th:necessaryConditions}.  Since an ${\rm SR}_k$-solution has period $k + m$, its stability is determined by the eigenvalues of $D f^{k+m}$ evaluated at any point of the solution.  Below we work with the point $\left( x^{(k)}_k, y^{(k)}_k \right)$.  Since $\left( x^{(k)}_k, y^{(k)}_k \right) = T_0^k \left( x^{(k)}_0, y^{(k)}_0 \right)$, and $\left( x^{(k)}_0, y^{(k)}_0 \right) = T_1 \left( x^{(k)}_k, y^{(k)}_k \right)$ (see Fig.~\ref{fig:SRP_defn}), we have
\begin{equation}
D f^{k+m} \left( x^{(k)}_k, y^{(k)}_k \right) = D T_0^k \left( x^{(k)}_0, y^{(k)}_0 \right)
D T_1 \left( x^{(k)}_k, y^{(k)}_k \right).
\label{eq:DfkPlusm}
\end{equation}

To obtain information on the eigenvalues of \eqref{eq:DfkPlusm} we first construct bounds on the values of the points of an ${\rm SR}_k$-solution (Lemmas \ref{le:y0Bound} and \ref{le:xjyjBound}).  We then estimate the entries of $D T_0^k \left( x^{(k)}_0, y^{(k)}_0 \right)$ (Lemmas \ref{le:MjBoundsl} and \ref{le:PjBoundsl}).  Next we estimate the contribution of the resonant terms in $T_0^k$ (Lemma \ref{le:T0k}).  These finally enable us to prove Theorem \ref{th:necessaryConditions}.  Essentially we show that if the conditions \eqref{eq:tangencyCondition}--\eqref{eq:globalResonanceCondition} are not all met then the trace of \eqref{eq:DfkPlusm}, denoted $\tau_k$ above, diverges as $k \to \infty$.  This implies that the eigenvalues of  \eqref{eq:DfkPlusm} cannot lie inside the shaded region of Fig.~\ref{fig:stability_triangle} for more than finitely many values of $k$.


We first observe that the resonant terms $F$ and $G$ of \eqref{eq:T01} are continuous and $\cN$ is bounded, so there exists $R > 0$ such that
\begin{equation}
|F(x,y)| \le R, \quad |G(x,y)| \le R, \quad \text{for all}~ (x,y) \in \cN.
\label{eq:FGBound}
\end{equation}
Let
\begin{equation}
\eta_R = \frac{1 - |\sigma|^{-\frac{1}{2}}}{R},
\label{eq:xyBound}
\end{equation}
and notice $\eta_R > 0$.

\begin{lemma}
Suppose an infinite family of ${\rm SR}_k$-solutions with $\eta = \eta_R$
satisfies $\left( x^{(k)}_0, y^{(k)}_0 \right) \to (x^*,0)$ as $k \to \infty$.
Then
\begin{equation}
\left| y^{(k)}_0 \right| \le 2 y^* |\sigma|^{-\frac{k}{2}},
\label{eq:y0Bound}
\end{equation}
for all sufficiently large values of $k$.
\label{le:y0Bound}
\end{lemma}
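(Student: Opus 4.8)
The plan is to turn the desired bound on $\left|y_0^{(k)}\right|$ into a bound on $\left|y_k^{(k)}\right|$ by a geometric‑growth estimate for the $y$‑coordinates along the portion of the orbit lying in $\cN_{\eta_R}$, and then to control $\left|y_k^{(k)}\right|$ by showing that the return point $\left(x_k^{(k)},y_k^{(k)}\right)$ converges to the homoclinic point $(0,y^*)$ as $k\to\infty$.

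For the growth estimate: for $j=0,\dots,k-1$ the point $\left(x_j^{(k)},y_j^{(k)}\right)$ lies in $\cN_{\eta_R}$, so $\left|x_j^{(k)}y_j^{(k)}\right|<\eta_R$, and then by \eqref{eq:FGBound} and \eqref{eq:xyBound} the resonance term satisfies $\left|x_j^{(k)}y_j^{(k)}G\!\left(x_j^{(k)},y_j^{(k)}\right)\right|\le\eta_R R=1-|\sigma|^{-\frac12}$. Since the second component of $T_0$ in \eqref{eq:T01} is $\sigma y\left(1+xyG(x,y)\right)$, the factor $\left|1+x_j^{(k)}y_j^{(k)}G\right|$ is at least $|\sigma|^{-\frac12}$, so $\left|y_{j+1}^{(k)}\right|\ge|\sigma|^{\frac12}\left|y_j^{(k)}\right|$; iterating from $j=0$ to $j=k-1$ gives $\left|y_k^{(k)}\right|\ge|\sigma|^{\frac{k}{2}}\left|y_0^{(k)}\right|$, so that $\left|y_0^{(k)}\right|\le|\sigma|^{-\frac{k}{2}}\left|y_k^{(k)}\right|$. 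It therefore suffices to prove $\left|y_k^{(k)}\right|\le 2y^*$ for all large $k$, which I would deduce from $\left(x_k^{(k)},y_k^{(k)}\right)\to(0,y^*)$.

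To prove that convergence, observe $T_1\!\left(x_k^{(k)},y_k^{(k)}\right)=\left(x_0^{(k)},y_0^{(k)}\right)\to(x^*,0)=T_1(0,y^*)$. Since $c_1d_2-c_2d_1=\det DT_1(0,y^*)\ne0$ under the hypotheses of Theorem~\ref{th:necessaryConditions}, $T_1=f^m$ restricts to a diffeomorphism of a neighbourhood $W\ni(0,y^*)$ onto a neighbourhood $V\ni(x^*,0)$; hence, once it is known that $\left(x_k^{(k)},y_k^{(k)}\right)\in W$ for all large $k$, continuity of $\left(T_1|_W\right)^{-1}$ gives $\left(x_k^{(k)},y_k^{(k)}\right)=\left(T_1|_W\right)^{-1}\!\left(x_0^{(k)},y_0^{(k)}\right)\to(0,y^*)$. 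To show the return point eventually lies in $W$, I would argue by compactness: $\left(x_k^{(k)},y_k^{(k)}\right)=T_0\!\left(x_{k-1}^{(k)},y_{k-1}^{(k)}\right)$ with $\left(x_{k-1}^{(k)},y_{k-1}^{(k)}\right)\in\cN$ bounded, so the sequence is bounded; passing to a subsequence along which $\left(x_{k-\ell}^{(k)},y_{k-\ell}^{(k)}\right)\to(p_\ell,q_\ell)$ for every fixed $\ell\ge0$, one obtains a backward $T_0$‑orbit $T_0(p_{\ell+1},q_{\ell+1})=(p_\ell,q_\ell)$ with $(p_\ell,q_\ell)\in\overline{\cN_{\eta_R}}$ for $\ell\ge1$. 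The growth estimate run backwards forces $q_\ell\to0$; then, since $\left|p_\ell\right|=|\lambda|\left|p_{\ell+1}\right|\left|1+p_{\ell+1}q_{\ell+1}F\right|$ with $p_{\ell+1}q_{\ell+1}\to0$ and $|\lambda|<1$, the quantities $\left|p_\ell\right|$ would grow geometrically in $\ell$ unless $p_\ell\equiv0$; boundedness of the backward orbit then gives $p_\ell=0$ for all $\ell\ge1$, hence $p_0=0$ too, so $(0,q_0)$ lies on the local unstable manifold $\{x=0\}$ and satisfies $T_1(0,q_0)=(x^*,0)$. Finally, since $f$ is locally invertible along the homoclinic orbit, pulling this relation back along the $y$‑axis --- on which $T_0$ acts as $y\mapsto\sigma y$ and where the homoclinic points $(0,y^*/\sigma^\ell)$ lie in $\cN$ by \eqref{eq:fourPoints} and convexity --- and using local injectivity of $f^{m+\ell}$ near these points (automatic close to the origin) forces $q_0=y^*$. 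Thus every subsequential limit of $\left(x_k^{(k)},y_k^{(k)}\right)$ is $(0,y^*)$, which proves the claim.

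Combining the two ingredients, $y_k^{(k)}\to y^*>0$ yields $\left|y_k^{(k)}\right|\le2y^*$ for all sufficiently large $k$, and hence $\left|y_0^{(k)}\right|\le|\sigma|^{-\frac{k}{2}}\left|y_k^{(k)}\right|\le2y^*|\sigma|^{-\frac{k}{2}}$. The growth estimate and this last combination are routine; the main obstacle is the convergence $\left(x_k^{(k)},y_k^{(k)}\right)\to(0,y^*)$, and within it the step that singles out $(0,y^*)$ among the points of the unstable manifold that $T_1$ sends to $(x^*,0)$ --- which is where local invertibility along the homoclinic orbit and the freedom to take $x^*,y^*$ (and $\cN$) small are used, and where one must take care of limit points that could a priori sit on $\partial\cN$.
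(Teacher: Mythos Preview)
Your growth estimate $\left|y_{j+1}^{(k)}\right|\ge|\sigma|^{1/2}\left|y_j^{(k)}\right|$ and the final combination with $\left|y_k^{(k)}\right|\le 2y^*$ are exactly the paper's proof. The difference is that the paper simply \emph{asserts} $y_k^{(k)}\to y^*$ in one line with no justification whatsoever, whereas you devote most of your effort to a compactness/backward-orbit argument for it.

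Your added argument correctly shows that every subsequential limit of $\left(x_k^{(k)},y_k^{(k)}\right)$ has the form $(0,q_0)$ with $f^m(0,q_0)=(x^*,0)$. The step from there to $q_0=y^*$, however, does not close as written. Passing to $f^{m+\ell}$ gains nothing: along the $y$-axis $f^\ell$ acts as $y\mapsto\sigma^\ell y$, so the injectivity neighbourhood of $f^{m+\ell}$ near $(0,y^*/\sigma^\ell)$ has radius of order $C|\sigma|^{-\ell}$, where $C$ is the radius of the injectivity neighbourhood of $f^m$ at $(0,y^*)$; but $(0,q_0/\sigma^\ell)$ sits at distance $|y^*-q_0|\,|\sigma|^{-\ell}$ from $(0,y^*/\sigma^\ell)$, so the two scalings match and the question reduces right back to whether $(0,q_0)$ already lies in the injectivity domain of $T_1$ at $(0,y^*)$ --- which is what you needed in the first place. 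You also invoke $c_1d_2-c_2d_1\ne0$, a hypothesis of Theorem~\ref{th:necessaryConditions} but not of this lemma. The paper sidesteps all of this by treating the convergence $\left(x_k^{(k)},y_k^{(k)}\right)\to(0,y^*)$ as implicit in the standing assumption that the ${\rm SR}_k$-solutions approach the homoclinic orbit.
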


\begin{proof}
By assumption $\left( x^{(k)}_j, y^{(k)}_j \right) \in \cN_{\eta_R}$
for each $j \in \{ 0,\ldots,k-1 \}$,
thus from \eqref{eq:T01}, \eqref{eq:FGBound}, and \eqref{eq:xyBound} we obtain
\begin{align*}
\left| y^{(k)}_{j+1} \right| &= |\sigma| \left| y^{(k)}_j \right|
\left| 1 + x^{(k)}_j y^{(k)}_j G \left( x^{(k)}_j, y^{(k)}_j \right) \right| \\
&\ge |\sigma| \left| y^{(k)}_j \right| \left( 1 - \eta_R R \right) \\
&= |\sigma|^{\frac{1}{2}} \left| y^{(k)}_j \right|.
\end{align*}
By applying this bound $k$ times we obtain
\begin{equation}
\left| y^{(k)}_0 \right| \le |\sigma|^{-\frac{k}{2}} \left| y^{(k)}_k \right|.
\nonumber
\end{equation}
But $y^{(k)}_k \to y^*$ as $k \to \infty$ thus
$\left| y^{(k)}_k \right| \le 2 y^*$, say, for all sufficiently large values of $k$.
This verifies \eqref{eq:y0Bound}.
\end{proof}

\begin{lemma}
Suppose $|\lambda \sigma| \le 1$ and suppose an infinite family of ${\rm SR}_k$-solutions with $\eta = \eta_R$
satisfies $\left( x^{(k)}_0, y^{(k)}_0 \right) \to (x^*,0)$ as $k \to \infty$.
Then there exists $\omega > 0$ such that
\begin{equation}
\begin{split}
\left| x^{(k)}_j \right| &\le \omega |\lambda|^j, \\
\left| y^{(k)}_j \right| &\le \omega |\sigma|^{j-k},
\label{eq:xjyjBound}
\end{split}
\end{equation}
for all $j \in \{ 0, \ldots, k \}$ and all sufficiently large values of $k$.
\label{le:xjyjBound}
\end{lemma}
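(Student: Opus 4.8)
The plan is to bound the accumulated effect of the resonance terms in $T_0^k$ \emph{multiplicatively} along the ${\rm SR}_k$-solution, exploiting the fact that $\left| x^{(k)}_0 y^{(k)}_0 \right|$ is exponentially small in $k$ by Lemma~\ref{le:y0Bound}. Abbreviate $p_j = \left| x^{(k)}_j y^{(k)}_j \right|$ and $P_j = \prod_{i=0}^{j-1} \left( 1 + R p_i \right)$, so $P_0 = 1$. From \eqref{eq:T01} we have $x^{(k)}_{j+1} y^{(k)}_{j+1} = \lambda \sigma\, x^{(k)}_j y^{(k)}_j \left( 1 + x^{(k)}_j y^{(k)}_j F \right) \left( 1 + x^{(k)}_j y^{(k)}_j G \right)$, hence for each $j \in \{ 0, \ldots, k-1 \}$, where $\left( x^{(k)}_j, y^{(k)}_j \right) \in \cN_{\eta_R}$ so that $p_j < \eta_R$ and \eqref{eq:FGBound} applies,
\[
p_{j+1} \le |\lambda \sigma|\, p_j \left( 1 + R p_j \right)^2 \le p_j \left( 1 + R p_j \right)^2 ,
\]
using $|\lambda \sigma| \le 1$. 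Iterating gives $p_j \le P_j^2\, p_0$ for all $j \in \{ 0, \ldots, k \}$.

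First I would show $P_j \le 2$ for all $j \in \{ 0, \ldots, k \}$ and all sufficiently large $k$; this is a discrete Gr\"onwall-type induction on $j$. If $P_i \le 2$ for every $i \le j$, then $p_i \le 4 p_0$ for those $i$, so $P_{j+1} \le \left( 1 + 4 R p_0 \right)^{j+1} \le e^{4 R k p_0}$, which is at most $2$ provided $4 R k p_0 \le \ln 2$. By Lemma~\ref{le:y0Bound} (and $x^{(k)}_0 \to x^*$) we have $p_0 \le 4 x^* y^* |\sigma|^{-k/2}$ for large $k$, and $k |\sigma|^{-k/2} \to 0$, so the inequality $4 R k p_0 \le \ln 2$ holds once $k$ is large, closing the induction. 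The same estimate $p_i \le 4 p_0$ yields a companion lower bound: since $R p_i \le R \eta_R = 1 - |\sigma|^{-1/2} < 1$ by \eqref{eq:xyBound}, each factor $1 - R p_i$ is positive, and $\prod_{i=0}^{k-1} \left( 1 - R p_i \right) \ge \left( 1 - 4 R p_0 \right)^k \ge 1 - 4 R k p_0 \ge \tfrac12$ for large $k$.

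Next I would derive the two claimed bounds from the scalar recursions. From \eqref{eq:T01} and \eqref{eq:FGBound}, $\left| x^{(k)}_{j+1} \right| \le |\lambda| \left( 1 + R p_j \right) \left| x^{(k)}_j \right|$ for $j \le k-1$, so $\left| x^{(k)}_j \right| \le |\lambda|^j P_j \left| x^{(k)}_0 \right| \le 4 x^* |\lambda|^j$ for $j \le k$, using $\left| x^{(k)}_0 \right| \le 2 x^*$. Similarly $\left| y^{(k)}_{j+1} \right| \ge |\sigma| \left( 1 - R p_j \right) \left| y^{(k)}_j \right|$ for $j \le k-1$, so $\left| y^{(k)}_j \right| \le \left| y^{(k)}_{j+1} \right| \big/ \left( |\sigma| \left( 1 - R p_j \right) \right)$; iterating up to $j = k$ and using the lower bound on $\prod \left( 1 - R p_i \right)$ gives $\left| y^{(k)}_j \right| \le 2 |\sigma|^{j-k} \left| y^{(k)}_k \right| \le 4 y^* |\sigma|^{j-k}$ for $j \le k$, where as in the proof of Lemma~\ref{le:y0Bound} we used $y^{(k)}_k \to y^*$. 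Taking $\omega = 4 \max \{ x^*, y^* \}$ then finishes the argument.

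I expect the only real obstacle to be keeping the resonance factor $P_j$ bounded as $k \to \infty$: the crude bound $P_j \le \left( 1 + R \eta_R \right)^k$ diverges, and what rescues the estimate is the combination of $|\lambda \sigma| \le 1$ (which prevents $p_j$ from ever exceeding $\cO(p_0)$) with Lemma~\ref{le:y0Bound} (which makes $p_0$ exponentially small). The mild circularity between ``$P_j$ bounded'' and ``$p_j$ small'' is precisely what the induction on $j$ above is designed to resolve.
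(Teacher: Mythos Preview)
Your proof is correct and follows essentially the same strategy as the paper: both exploit $|\lambda\sigma|\le 1$ together with Lemma~\ref{le:y0Bound} to show that $\left|x^{(k)}_j y^{(k)}_j\right|$ stays of order $p_0 = \cO\!\left(|\sigma|^{-k/2}\right)$ uniformly in $j$, so that the accumulated multiplicative correction $\prod_j(1\pm R p_j)$ over $k$ steps remains bounded between $\tfrac12$ and $2$ for large $k$. The only organisational difference is that you first isolate and bound the scalar product $p_j$ (via the auxiliary $P_j$) and then read off the bounds on $|x_j|$ and $|y_j|$, whereas the paper runs a single induction directly on the two-sided inequalities \eqref{eq:xjDoubleBound}--\eqref{eq:yjDoubleBound} for $|x_j|$ and $|y_j|$ simultaneously; the underlying estimates and the role of $|\lambda\sigma|\le 1$ are identical.
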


\begin{proof}
By Lemma \ref{le:y0Bound} there exists $\omega_0 > 0$ such that
\begin{align}
\left| x^{(k)}_0 \right| &\le \omega_0 \,,
\label{eq:x0Bound2} \\
\left| y^{(k)}_0 \right| &\le \omega_0 |\sigma|^{-\frac{k}{2}},
\label{eq:y0Bound2}
\end{align}
for all sufficiently large values of $k$.
For the remainder of the proof we assume $k$ is sufficiently large that
\begin{align}
\left( 1 + 4 \omega_0^2 R |\sigma|^{-\frac{k}{2}} \right)^k &< 2,
\label{eq:powerkBoundUpper} \\
\left( 1 - 4 \omega_0^2 R |\sigma|^{-\frac{k}{2}} \right)^k &> \tfrac{1}{2}.
\label{eq:powerkBoundLower}
\end{align}
Below we use induction on $j$ to prove that
\begin{align}
\left| x^{(k)}_0 \right| |\lambda|^j \left( 1 - 4 \omega_0^2 R |\sigma|^{-\frac{k}{2}} \right)^j
\le \left| x^{(k)}_j \right| \le
\left| x^{(k)}_0 \right| |\lambda|^j \left( 1 + 4 \omega_0^2 R |\sigma|^{-\frac{k}{2}} \right)^j,
\label{eq:xjDoubleBound} \\
\left| y^{(k)}_0 \right| |\sigma|^j \left( 1 - 4 \omega_0^2 R |\sigma|^{-\frac{k}{2}} \right)^j
\le \left| y^{(k)}_j \right| \le
\left| y^{(k)}_0 \right| |\sigma|^j \left( 1 + 4 \omega_0^2 R |\sigma|^{-\frac{k}{2}} \right)^j,
\label{eq:yjDoubleBound}
\end{align}
for all $j \in \{ 0,\ldots,k \}$.
This will complete the proof because, first, \eqref{eq:x0Bound2},
\eqref{eq:powerkBoundUpper}, and \eqref{eq:xjDoubleBound} combine to produce
\begin{equation}
\left| x^{(k)}_j \right| \le 2 \omega_0 |\lambda|^j.
\nonumber
\end{equation}
Second \eqref{eq:powerkBoundLower} and \eqref{eq:yjDoubleBound} evaluated at $j=k$ combine to produce
$\frac{1}{2} \left| y^{(k)}_0 \right| |\sigma|^k \le \left| y^{(k)}_k \right|$.
But $y^{(k)}_k \to y^*$ as $k \to \infty$, thus
$\left| y^{(k)}_k \right| \le 2 y^*$ for sufficiently large values of $k$, and so
$\left| y^{(k)}_0 \right| \le 4 y^* |\sigma|^{-k}$.
Hence by \eqref{eq:powerkBoundUpper} and \eqref{eq:yjDoubleBound} we have
\begin{equation}
\left| y^{(k)}_j \right| \le 8 y^* |\sigma|^{j-k}.
\nonumber
\end{equation}

It remains to verify \eqref{eq:xjDoubleBound}--\eqref{eq:yjDoubleBound}.
Trivially these hold for $j=0$.
Suppose \eqref{eq:xjDoubleBound}--\eqref{eq:yjDoubleBound} hold for some $j \in \{ 0,\ldots,k-1 \}$
(this is our induction hypothesis).
Then by \eqref{eq:T01},
\begin{align}
|\lambda| \left| x^{(k)}_j \right| \left( 1 - \left| x^{(k)}_j y^{(k)}_j \right| R \right)
&\le \left| x^{(k)}_{j+1} \right| \le
|\lambda| \left| x^{(k)}_j \right| \left( 1 + \left| x^{(k)}_j y^{(k)}_j \right| R \right),
\label{eq:xjDoubleBoundProof} \\
|\sigma| \left| y^{(k)}_j \right| \left( 1 - \left| x^{(k)}_j y^{(k)}_j \right| R \right)
&\le \left| y^{(k)}_{j+1} \right| \le
|\sigma| \left| y^{(k)}_j \right| \left( 1 + \left| x^{(k)}_j y^{(k)}_j \right| R \right).
\label{eq:yjDoubleBoundProof}
\end{align}
The induction hypothesis implies
\begin{align}
\left| x^{(k)}_j y^{(k)}_j \right|
&\le \left| x^{(k)}_0 \right| \left| y^{(k)}_0 \right| |\lambda \sigma|^j
\left( 1 + 4 \omega_0^2 R |\sigma|^{-\frac{k}{2}} \right)^{2 j} \nonumber \\
&\le 4 \omega_0^2 |\sigma|^{-\frac{k}{2}},
\label{eq:xjyjBoundProof}
\end{align}
where we have used \eqref{eq:x0Bound2}, \eqref{eq:y0Bound2}, \eqref{eq:powerkBoundUpper}
and $|\lambda \sigma| \le 1$ in the second line.
By applying \eqref{eq:xjyjBoundProof} and
the induction hypothesis to \eqref{eq:xjDoubleBoundProof}--\eqref{eq:yjDoubleBoundProof}
we obtain \eqref{eq:xjDoubleBound}--\eqref{eq:yjDoubleBound} for $j+1$,
and this completes the induction step.
\end{proof}
For brevity we write
\begin{equation}
M_j = D T_0 \left( x^{(k)}_j, y^{(k)}_j \right)
= \begin{bmatrix} p_j & q_j \\ r_j & s_j \end{bmatrix},
\label{eq:Mjl}
\end{equation}
and
\begin{equation}
P_j = M_{j-1} \cdots M_1 M_0
= \begin{bmatrix} t_j & u_j \\ v_j & w_j \end{bmatrix}.
\label{eq:Pjl}
\end{equation}
\begin{lemma}
Suppose $|\lambda \sigma| \le 1$ and suppose an infinite family of ${\rm SR}_k$-solutions with $\eta = \eta_R$
satisfies $\left( x^{(k)}_0, y^{(k)}_0 \right) \to (x^*,0)$ as $k \to \infty$.
Then there exists $\alpha > 0$ such that
\begin{equation}
\begin{aligned}
|p_j - \lambda| &\le \alpha |\sigma|^{-k}, &
|q_j| &\le \alpha |\sigma|^{-2 j}, \\
|r_j| &\le \alpha |\sigma|^{2 (j-k)}, &
\hspace{10mm}
\left|s_j - \sigma \right| &\le \alpha |\sigma|^{-k},
\end{aligned}
\label{eq:MjBoundsl}
\end{equation}
for all $j \in \{ 0,\ldots,k-1 \}$ and all sufficiently large values of $k$.
\label{le:MjBoundsl}
\end{lemma}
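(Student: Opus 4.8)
The plan is to write $DT_0$ out explicitly and then show that each entry of $M_j$ differs from the corresponding entry of $\mathrm{diag}(\lambda,\sigma)$ by a term that is small because $\left(x^{(k)}_j, y^{(k)}_j\right)$ is close to the coordinate axes. Differentiating \eqref{eq:T01}, and writing $F$, $G$, $F_x$, $F_y$, $G_x$, $G_y$ for their values at $(x,y)$, one gets
\[
DT_0(x,y) = \begin{bmatrix}
\lambda\left(1 + 2 x y F + x^2 y F_x\right) & \lambda x^2\left(F + y F_y\right) \\[3pt]
\sigma y^2\left(G + x G_x\right) & \sigma\left(1 + 2 x y G + x y^2 G_y\right)
\end{bmatrix}.
\]
Thus $p_j - \lambda$, $q_j$, $r_j$, and $s_j - \sigma$ are each $\lambda$ or $\sigma$ times a polynomial in $x^{(k)}_j$ and $y^{(k)}_j$ with no constant term and with coefficients that are values of $F$, $G$, or their first partial derivatives. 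Since $F$ and $G$ are $C^\infty$, their first partials are continuous, so exactly as for \eqref{eq:FGBound} we may enlarge $R$ if necessary and assume $R$ also bounds $|F_x|$, $|F_y|$, $|G_x|$, $|G_y|$ on $\cN$.

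Next I would substitute the estimates $\left|x^{(k)}_j\right| \le \omega |\lambda|^j$ and $\left|y^{(k)}_j\right| \le \omega |\sigma|^{j-k}$ from Lemma~\ref{le:xjyjBound}, valid for $0 \le j \le k-1$ and all sufficiently large $k$. The engine of the argument is the hypothesis $|\lambda \sigma| \le 1$, which lets one convert powers of $|\lambda|$ into powers of $|\sigma|^{-1}$. For the diagonal entries, $\left|x^{(k)}_j y^{(k)}_j\right| \le \omega^2 |\lambda\sigma|^j |\sigma|^{-k} \le \omega^2 |\sigma|^{-k}$, and the cubic terms $x^2 y$ and $x y^2$ are likewise bounded by constants times $|\sigma|^{-k}$; this gives the bounds on $|p_j - \lambda|$ and $|s_j - \sigma|$. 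For $q_j$ I would use $\left|x^{(k)}_j\right|^2 \le \omega^2 |\lambda|^{2j} \le \omega^2 |\sigma|^{-2j}$ (again by $|\lambda\sigma|\le 1$), the factor $F + y^{(k)}_j F_y$ being bounded; for $r_j$ I would use $\left|y^{(k)}_j\right|^2 \le \omega^2 |\sigma|^{2(j-k)}$ directly, with $G + x^{(k)}_j G_x$ bounded. Taking $\alpha$ to be the maximum of the resulting finitely many constants yields \eqref{eq:MjBoundsl}.

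I do not anticipate a genuine difficulty: once the explicit Jacobian is in hand the proof is bookkeeping. The only points needing a little care are ensuring the first partials of $F$ and $G$ are bounded on the region visited by the orbit (handled exactly as the existing bound on $F$, $G$), and keeping track of the constraint $0 \le j \le k-1$, which guarantees $|\sigma|^{j-k} \le |\sigma|^{-1} \le 1$ so that the lower-order factors can always be absorbed into $\alpha$.
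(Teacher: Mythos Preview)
Your proposal is correct and follows essentially the same route as the paper: compute $DT_0$ explicitly, bound $F$, $G$ and their first partials on $\cN$, invoke Lemma~\ref{le:xjyjBound}, and use $|\lambda\sigma|\le 1$ to trade $|\lambda|^j$ for $|\sigma|^{-j}$. The only cosmetic difference is that the paper absorbs the derivative bounds directly into the constant $\alpha$ rather than ``enlarging $R$'' (which is slightly preferable, since $R$ is already fixed by the hypothesis $\eta=\eta_R$; you should introduce a fresh constant instead).
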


\begin{proof}
By Lemma \ref{le:xjyjBound} and $|\lambda \sigma| \leq 1$ there exists $\omega > 0$ such that
\begin{align}
\left| x^{(k)}_j \right| &\le \omega |\sigma|^{-j}, &
\left| y^{(k)}_j \right| &\le \omega |\sigma|^{j-k},
\label{eq:xjyjBound2l}
\end{align}
for all sufficiently large values of $k$.
By differentiating \eqref{eq:T01} we obtain
\begin{equation}
D T_0(x,y) = \begin{bmatrix}
\lambda \big( 1 + x y (2 F + x \frac{\partial{F}}{\partial{x}}) \big) & \lambda x^2 (F +  y \frac{\partial{F}}{\partial{y}})\\
\sigma y^2 (G + x \frac{\partial{G}}{\partial{x}}) & \sigma \big( 1 + x y (2 G + y \frac{\partial{G}}{\partial{y}}) \big)
\end{bmatrix}.
\label{eq:DT0l}
\end{equation}
Since $F$, $G$, and their derivatives are continuous and $\cN_{\eta_R}$ is bounded
there exists $\alpha > 0$ such that throughout $\cN_{\eta_R}$ we have that
$\alpha$ is greater than
$\omega^2 |\lambda| |2 F + x \frac{\partial{F}}{\partial{x}}|$,
$\omega^2 |\lambda| |F +  y \frac{\partial{F}}{\partial{y}}|$,
$|\sigma| \omega^2 |G + x \frac{\partial{G}}{\partial{x}}|$, and
$|\sigma| \omega^2 |2 G +  y \frac{\partial{G}}{\partial{y}}|$.
Then \eqref{eq:MjBoundsl} follows immediately from
\eqref{eq:xjyjBound2l} and \eqref{eq:DT0l}.
\end{proof}

\begin{lemma}
Suppose $|\lambda \sigma| \le 1$ and suppose an infinite family of ${\rm SR}_k$-solutions with $\eta = \eta_R$
satisfies $\left( x^{(k)}_0, y^{(k)}_0 \right) \to (x^*,0)$ as $k \to \infty$.
Then there exists $\beta > 0$ such that
\begin{equation}
\begin{aligned}
|t_j - \lambda^j| &\le \beta j |\sigma|^{-(j+k)}, &
|u_j| &\le \beta j |\sigma|^{-j}, \\
|v_j| &\le \beta j |\sigma|^{j - 2 k}, &
\hspace{10mm}
\left|w_j - \sigma^{j} \right| &\le \beta j |\sigma|^{j-k},
\end{aligned}
\label{eq:PjBoundsl}
\end{equation}
for all $j \in \{ 1,\ldots,k \}$ and all sufficiently large values of $k$.
\label{le:PjBoundsl}
\end{lemma}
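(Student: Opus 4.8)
The plan is to prove~\eqref{eq:PjBoundsl} by induction on $j$, using the recursion $P_{j+1} = M_j P_j$ together with the entrywise bounds on $M_j$ from Lemma~\ref{le:MjBoundsl}. Writing out $P_{j+1} = M_j P_j$ in components gives
\begin{align*}
t_{j+1} &= p_j t_j + q_j v_j, & u_{j+1} &= p_j u_j + q_j w_j, \\
v_{j+1} &= r_j t_j + s_j v_j, & w_{j+1} &= r_j u_j + s_j w_j,
\end{align*}
so each new entry is governed by a "dominant" term ($p_j t_j \approx \lambda t_j$ for $t$, $s_j w_j \approx \sigma w_j$ for $w$) plus a small correction. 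The base case $j=1$ is immediate since $P_1 = M_0$ and Lemma~\ref{le:MjBoundsl} gives exactly~\eqref{eq:PjBoundsl} with the factor $j=1$. For the inductive step I would fix $\beta$ large (to be chosen at the end so that all the implied constants are absorbed) and assume~\eqref{eq:PjBoundsl} holds for some $j \in \{1,\ldots,k-1\}$.

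For the $t$-entry, write $t_{j+1} - \lambda^{j+1} = \lambda(t_j - \lambda^j) + (p_j - \lambda)t_j + q_j v_j$. The first term contributes $|\lambda| \cdot \beta j |\sigma|^{-(j+k)} \le \beta j |\sigma|^{-(j+1+k)}$ after using $|\lambda| \le |\sigma|^{-1}$ (which follows from $|\lambda\sigma|\le 1$); the second is bounded using $|p_j - \lambda| \le \alpha|\sigma|^{-k}$ and $|t_j| \le |\lambda|^j + \beta j|\sigma|^{-(j+k)} \le 2|\sigma|^{-j}$ for large $k$; the third using $|q_j| \le \alpha|\sigma|^{-2j}$ and $|v_j| \le \beta j |\sigma|^{j-2k}$. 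Collecting, the two correction terms are each $\cO\!\left(|\sigma|^{-(j+1+k)}\right)$ up to constants, so their sum is at most $\beta |\sigma|^{-(j+1+k)}$ provided $\beta$ was chosen large enough — and crucially the "$+1$" in the factor $(j+1)$ on the right-hand side of the target inequality absorbs this extra single term. The analysis for $w_{j+1} - \sigma^{j+1}$ is the mirror image: $w_{j+1} - \sigma^{j+1} = \sigma(w_j - \sigma^j) + (s_j - \sigma)w_j + r_j u_j$, where now $|\sigma(w_j - \sigma^j)| \le \beta j |\sigma|^{j+1-k}$ exactly matches the leading part of the target, and the two corrections are each $\cO(j|\sigma|^{j+1-k})$ or smaller. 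The off-diagonal entries $u_{j+1} = p_j u_j + q_j w_j$ and $v_{j+1} = r_j u_j + s_j w_j$ are handled similarly but more easily, since they have no leading term to track: for $u_{j+1}$ the dominant contribution is $|p_j u_j| \approx |\lambda| \cdot \beta j |\sigma|^{-j}$ plus $|q_j w_j| \approx \alpha|\sigma|^{-2j}(|\sigma|^j + \cdots)$, and one checks both are $\cO(j|\sigma|^{-(j+1)})$; symmetrically for $v_{j+1}$.

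The main obstacle, as usual with these linear-algebra-style inductions, is bookkeeping the factor of $j$ correctly: one must verify that at each step the accumulated error genuinely only grows by an additive $\cO(1)$ multiple of the relevant power of $|\sigma|$, so that after $j$ steps the total is $\cO(j)$ times that power, rather than something that compounds geometrically. The key structural fact making this work is that the correction terms $(p_j-\lambda)$, $q_j$, $r_j$, $(s_j-\sigma)$ all carry an extra decaying factor of size $\cO(|\sigma|^{-k})$ (or better) relative to the naive scale, coming from the $|x^{(k)}_j y^{(k)}_j| = \cO(|\sigma|^{-k/2})$ smallness established in Lemma~\ref{le:xjyjBound}; this is precisely what prevents the errors from compounding and keeps them linear in $j$. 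Once all four entrywise estimates are established for $j+1$ with the same constant $\beta$, the induction closes, and the lemma holds for all $j \in \{1,\ldots,k\}$ and all sufficiently large $k$.
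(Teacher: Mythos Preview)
Your proposal is correct and follows essentially the same route as the paper: induction on $j$ via $P_{j+1} = M_j P_j$, using the entrywise bounds of Lemma~\ref{le:MjBoundsl} and choosing $\beta$ so that the $\cO(1)$ correction produced at each step is absorbed by the ``$+1$'' in the factor $(j+1)$ (the paper fixes $\beta = 2\alpha/|\lambda|$ explicitly rather than leaving it to the end). One harmless slip: in your second paragraph you wrote $v_{j+1} = r_j u_j + s_j w_j$, but as you had correctly at the outset it is $v_{j+1} = r_j t_j + s_j v_j$.
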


\begin{proof}
By Lemma \ref{le:xjyjBound} and $|\lambda\sigma| \leq 1$ there exists $\omega > 0$ satisfying
\eqref{eq:xjyjBound2l} for all sufficiently large values of $k$.
Let $\alpha > 0$ satisfy \eqref{eq:MjBoundsl}
and let $\beta = \frac{2 \alpha}{|\lambda|}$.

We now verify \eqref{eq:PjBoundsl} by induction on $j$.
Observe $P_1 = M_0$ so \eqref{eq:PjBoundsl} holds with $j = 1$
because \eqref{eq:MjBoundsl} holds with $j = 0$ and $\beta > \alpha$.
Now suppose \eqref{eq:PjBoundsl} holds for some $j \in \{ 1,\ldots,k-1 \}$
(this is our induction hypothesis).
Since $P_{j+1} = M_j P_j$ we have
\begin{equation}
\begin{bmatrix} t_{j+1} & u_{j+1} \\ v_{j+1} & w_{j+1} \end{bmatrix} =
\begin{bmatrix}
p_j t_j + q_j v_j & p_j u_j + q_j w_j \\
r_j t_j + s_j v_j & r_j u_j + s_j w_j
\end{bmatrix}.
\label{eq:Pjp1l}
\end{equation}
We now verify the four inequalities \eqref{eq:PjBoundsl} for $j+1$ in order.
First
\begin{align}
\left| t_{j+1} - \lambda^{j+1} \right|
&= \left| p_j t_j + q_j v_j - \lambda^{j+1} \right| \nonumber \\
&= \left| (p_j - \lambda) \left( t_j - \lambda^j \right) + (p_j - \lambda) \lambda^j
+ \lambda \left( t_j - \lambda^j \right) + q_j v_j \right| \nonumber \\
&\le |p_j - \lambda| \left| t_j - \lambda^j \right| + |p_j - \lambda| |\lambda|^j
+ |\lambda| \left| t_j - \lambda^j \right| + |q_j| |v_j|, \nonumber
\end{align}
and so by \eqref{eq:MjBoundsl} and the induction hypothesis we have
\begin{equation}
\left| t_{j+1} - \lambda^{j+1} \right|
\le \beta j |\sigma|^{-(k+j+1)}
+ \left( 2 \alpha \beta j |\sigma|^{-k+1} + \alpha|\sigma| \right) |\sigma|^{-(k+j+1)}.
\nonumber
\end{equation}
Thus for sufficiently large values of $k$,
\begin{align}
\left| t_{j+1} - \lambda^{j+1} \right|
&\le \beta j |\sigma|^{-(k+j+1)}
+ \tfrac{2 \alpha}{|\lambda|} |\sigma|^{-(k+j+1)}
\nonumber \\
&= \beta (j+1) |\sigma|^{-(k+j+1)},
\nonumber
\end{align}
where we have substituted our formula for $\beta$ in the last line.
In a similar fashion we obtain
\begin{align}
\left| u_{j+1} \right|
&= \left| p_j u_j + q_j w_j \right| \nonumber \\
&\le |p_j| |u_j| + |q_j| |w_j| \nonumber \\
&\le \left( |\lambda| + \alpha |\sigma|^{-k} \right) \beta j |\sigma|^{-j}
+ \alpha |\sigma|^{-2 j} \left( |\sigma|^{j} + \beta j |\sigma|^{j-k} \right) \nonumber \\
&= \beta j |\sigma|^{-(j+1)} + \left(2\alpha\beta j  |\sigma|^{-k+1} + \alpha |\sigma|\right)
|\sigma|^{-(j+1)}\\
&\le \beta j |\sigma|^{-(j+1)} + \tfrac{2 \alpha}{|\lambda|} |\sigma|^{-(j+1)} \nonumber \\
&= \beta (j+1) |\sigma|^{-(j+1)}, \nonumber
\end{align}
\begin{align}
\left| v_{j+1} \right|
&= \left| r_j t_j + s_j v_j \right| \nonumber \\
&\le |r_j| |t_j| + |s_j| |v_j| \nonumber \\
&\le \alpha |\sigma|^{2 j - 2 k} \left( |\lambda|^j + \beta j |\sigma|^{-(k+j)} \right)
+ \left( |\sigma| + \alpha |\sigma|^{-k} \right) \beta j |\sigma|^{j - 2k} \nonumber \\
&\le \beta j |\sigma|^{(j+1) - 2 k}
+ \left( 2 \alpha \beta j |\sigma|^{-(k+1)} + \frac{\alpha}{|\sigma|} \right) |\sigma|^{(j+1) - 2 k} \nonumber \\
&\le \beta j |\sigma|^{(j+1) - 2 k} + \frac{2 \alpha}{|\sigma|} |\sigma|^{(j+1) - 2 k} \nonumber \\
&< \beta (j+1) |\sigma|^{(j+1) - 2 k}, \nonumber
\end{align}
and
\begin{align}
\left| w_{j+1} - \sigma^{(j+1)} \right|
&= \left| r_j u_j + s_j w_j - \sigma^{(j+1)} \right| \nonumber \\
&\le |r_j| |u_j| + \left| s_j - \sigma \right| \left| w_j - \sigma^{j} \right|
+ \left| s_j - \sigma \right| |\sigma|^{j} + |\sigma| \left| w_j - \sigma^{j} \right| \nonumber \\
&\le \beta j |\sigma|^{(j+1)-k}
+ \left( 2 \alpha \beta j |\sigma|^{-(k+1)} + \alpha |\sigma|^{-1} \right) |\sigma|^{(j+1)-k} \nonumber \\
&\le \beta j |\sigma|^{(j+1)-k}
+ \frac{2\alpha}{|\sigma|} |\sigma|^{(j+1)-k} \nonumber \\
&< \beta (j+1) |\sigma|^{(j+1)-k},
\end{align}
for sufficiently large values of $k$.
\end{proof}

\begin{lemma}
If $|\lambda \sigma| \leq 1$ then in a neighbourhood of $(x^*,0)$,
\begin{equation}
T_0^k(x,y) = \begin{bmatrix}
\lambda^k x \left( 1 + \cO(kxy) \right) \\
\sigma^k y \left(1 + \cO(kxy) \right)
\end{bmatrix}.
\label{eq:T0kl1}
\end{equation}
\label{le:T0k}
\end{lemma}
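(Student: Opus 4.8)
Here is the plan.

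The plan is to solve the recursion defining $T_0^k$ in closed form, reducing \eqref{eq:T0kl1} to an estimate on a product along the orbit, and then to control that product using $|\lambda\sigma|\le1$ and the smallness of the neighbourhood. Writing $(x_j,y_j)=T_0^j(x,y)$, equation \eqref{eq:T01} gives the recursion $x_{j+1}=\lambda x_j\bigl(1+x_jy_j F(x_j,y_j)\bigr)$ and $y_{j+1}=\sigma y_j\bigl(1+x_jy_j G(x_j,y_j)\bigr)$, and telescoping yields the exact formulas $x_k=\lambda^k x\prod_{j=0}^{k-1}\bigl(1+x_jy_j F(x_j,y_j)\bigr)$ and $y_k=\sigma^k y\prod_{j=0}^{k-1}\bigl(1+x_jy_j G(x_j,y_j)\bigr)$. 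Since $\bigl|\prod_j(1+z_j)-1\bigr|\le\prod_j(1+|z_j|)-1\le\exp\!\bigl(\sum_j|z_j|\bigr)-1$ and $|F|,|G|\le R$ on $\cN$ by \eqref{eq:FGBound}, it then suffices to show that $\sum_{j=0}^{k-1}|x_jy_j|\le Ck|xy|$ on a suitable neighbourhood of $(x^*,0)$ for a constant $C$ independent of $k$, and that there $Ck|xy|$ stays bounded, so that $\exp(CRk|xy|)-1=\cO(kxy)$.

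For $y=0$ the claim is immediate since $T_0^k(x,0)=(\lambda^k x,0)$, so assume $y\neq0$. I would shrink the neighbourhood so that it lies inside $\cN_{\eta_R}$; while the orbit stays in $\cN_{\eta_R}$, equation \eqref{eq:T01} and $|\lambda\sigma|\le1$ give $|x_{j+1}y_{j+1}|\le|x_jy_j|\bigl(1+R|x_jy_j|\bigr)^2$, and also $|y_{j+1}|\ge|\sigma|^{\frac{1}{2}}|y_j|$ by \eqref{eq:xyBound} (exactly as in the proof of Lemma~\ref{le:y0Bound}). The second inequality, together with boundedness of $\cN$, shows that an orbit of $T_0$ started near $(x^*,0)$ can remain in $\cN_{\eta_R}$ for only $\cO\!\bigl(\log(1/|y|)\bigr)$ iterates; since the orbit must lie in $\cN_{\eta_R}$ through step $k-1$ for the relevant $T_0^k(x,y)$ to be defined and close to the $x$-axis, this forces $k|xy|\to0$ as $(x,y)\to(x^*,0)$. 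Fixing the neighbourhood small enough, the first inequality then propagates by induction (the induction closing precisely because $k|xy|$ is small, cf. the proof of Lemma~\ref{le:xjyjBound}) to give $|x_jy_j|\le2|xy|$ for $0\le j\le k$, hence $\sum_{j=0}^{k-1}|x_jy_j|\le2k|xy|$ and $\exp(2Rk|xy|)-1\le4Rk|xy|$. Substituting into the product formulas above yields \eqref{eq:T0kl1}.

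The delicate point is the resonant case $|\lambda\sigma|=1$: then the product $xy$ is not contracted along the orbit, so a priori the resonant corrections $1+x_jy_jF(x_j,y_j)$ could compound geometrically in $k$ rather than linearly. This is exactly where the hypothesis $|\lambda\sigma|\le1$ enters — it yields $|x_{j+1}y_{j+1}|\le|x_jy_j|(1+R|x_jy_j|)^2$ with no expanding prefactor — and it must be combined with the a priori escape estimate, which keeps $k|xy|$ uniformly small and so prevents the linear-in-$k$ accumulation of the corrections from spoiling the $\cO(kxy)$ bound. (In the way the lemma is actually applied, points approach $(x^*,0)$ rapidly enough — cf. Lemma~\ref{le:y0Bound} — that $k|xy|\to0$ is automatic, so these complications can be sidestepped in context.) Once the orbit is under control the remaining manipulations are routine.
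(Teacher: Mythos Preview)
Your argument is correct and reaches the same conclusion, though by a different route from the paper. The paper writes $T_0^k(x,y)=\bigl(\lambda^k x(1+xyF_k),\,\sigma^k y(1+xyG_k)\bigr)$, derives from $T_0^{k+1}=T_0\circ T_0^k$ the recursion
\[
F_{k+1}=F_k+(\lambda\sigma)^k\,F\!\bigl(T_0^k(x,y)\bigr)\,(1+xyF_k)^2(1+xyG_k),
\]
and proves directly by induction on $k$ that $|F_k|,|G_k|\le 2kR$ on the region $\{k|xy|\le\delta\}$ with $\delta=\tfrac{2^{1/3}-1}{2R}$; the hypothesis $|\lambda\sigma|\le1$ is used to discard the prefactor $(\lambda\sigma)^k$, and the smallness of $k|xy|$ closes the step via $(1+2kR|xy|)^3\le2$. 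You instead telescope to the exact product $\prod_{j}(1+x_jy_jF(x_j,y_j))$, establish the orbit bound $|x_jy_j|\le2|xy|$ by the same inductive mechanism, and then convert the product to $\cO(k|xy|)$ through $\prod(1+z_j)-1\le e^{\sum|z_j|}-1$. The two computations are essentially dual --- your orbit bound $|x_jy_j|\le2|xy|$ is the multiplicative counterpart of the paper's bound on $F_j,G_j$ --- and both hinge on the same smallness condition on $k|xy|$. Your exponential inequality is arguably the cleaner bookkeeping; the paper's formulation has the small advantage of displaying the correction explicitly as $xy$ times a bounded function. Your escape-from-$\cN_{\eta_R}$ discussion is not needed for the lemma itself (the $\cO(kxy)$ statement is exactly the assertion that a uniform constant works on the $k$-dependent region $\{k|xy|\le\delta\}$), but it correctly anticipates why $k|xy|$ is small in the subsequent applications.
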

\begin{proof}
Write
\begin{equation}
    T_{0}^{k}(x,y) = \begin{bmatrix}
    \lambda^{k}x\left(1 + xyF_{k}(x,y)\right)\\
    \sigma^{k}y\left( 1 + xyG_{k}(x,y)\right)
    \end{bmatrix}.
    \label{eq:T0k1}
\end{equation}
Let $\delta = \frac{2^{\frac{1}{3}}-1}{2R}$. We will show that if $k|xy| \leq \delta$, then
\begin{equation}
    |F_{k}(x,y)| \leq 2kR, \hspace{0.2cm} |G_{k}(x,y)| \leq 2kR,
    \label{eq:FkGkl1}
\end{equation}
and this will complete the proof. We prove \eqref{eq:FkGkl1} by induction on $k$. Equation \eqref{eq:FkGkl1} is certainly true for $k=1$ because $|F_{1}(x,y)| = |F(x,y)| \leq R < 2R$, and similarly for $G_{1}$. Suppose \eqref{eq:FkGkl1} is true for some $k \geq 1$ (this is our induction hypothesis). By matching terms in $T_{0}^{k+1} = T_{0} \circ T_{0}^{k}$ we obtain  
\begin{equation}
    F_{k+1}(x,y) = F_{k}(x,y) + \lambda^{k}\sigma^{k}F\left(T_{0}^{k}(x,y)\right) \left( 1 + xyF_{k}(x,y)\right)^{2} \left( 1 + xyG_{k}(x,y)\right).
\end{equation}
By applying the induction hypothesis we obtain
\begin{equation}
    |F_{k+1}(x,y)| \leq 2kR + |F(T_{0}^{k}(x,y))|\left( 1 + |xy|2kR\right)^{3}.
\end{equation}
Since $k|xy|$ is small and $(x,y) \approx (x^*,0)$, we can assume $T_{0}^{k}(x,y) \in \cN$ and so $|F\left(T_{0}^{k}(x,y)\right)| \leq R$. Also $1 + |xy|2kR \leq 2^{\frac{1}{3}}$, so
\begin{equation}
    |F_{k+1}(x,y)| \leq 2(k+1)R,
\end{equation}
as required (and the result for $G_{k+1}$ is obtained similarly).
\end{proof}

\begin{proof}[Proof of Theorem \eqref{th:necessaryConditions}]
For brevity we only provide details in the case $|\lambda \sigma| \le 1$.
Since $T_0$ and $T_1$ are both locally invertible the case $|\lambda \sigma| > 1$
can be accommodated by considering $f^{-1}$ in place of $f$.

The trace of \eqref{eq:DfkPlusm} is
\begin{equation}
\tau_k = {\rm trace} \left( P_k D T_1 \left( x^{(k)}_k, y^{(k)}_k \right) \right),
\nonumber
\end{equation}
where $P_k$ is given by \eqref{eq:Pjl} with $j = k$, and so

\begin{equation}
\tau_k = {\rm trace} \left( \begin{bmatrix} t_k & u_k \\ v_k & w_k \end{bmatrix}
\begin{bmatrix} c_1 + \cO \left( x^{(k)}_k, y^{(k)}_k - y^* \right)
& c_2 + \cO \left( x^{(k)}_k, y^{(k)}_k - y^* \right) \\
d_1 + \cO \left( x^{(k)}_k, y^{(k)}_k - y^* \right)
& d_2 + \cO \left( x^{(k)}_k, y^{(k)}_k - y^* \right)
\end{bmatrix} \right).
\label{eq:tauk4l}
\end{equation}
From Lemma \ref{le:PjBoundsl} we see that the leading order term in $\tau_k$ is $w_{k}d_{2}$. In particular, if $d_2 \ne 0$ then $|\tau_{k}| \rightarrow \infty$ as $k \rightarrow \infty$. But ${\rm SR}_k$-solutions are assumed to be stable for arbitrarily large values of $k$, so this is only possible if $d_2 = 0$, equation \eqref{eq:tangencyCondition}.
To verify \eqref{eq:det1Condition} and \eqref{eq:globalResonanceCondition} we
use \eqref{eq:T11} and \eqref{eq:T0kl1} to solve for fixed points of $T_0^k \circ T_1$ in order to derive the leading order component of $\left(x_{k}^{(k)},y_{k}^{(k)}\right)$. Since $\left(x_{k}^{(k)},y_{k}^{(k)}\right) \rightarrow \left( 0, y^{*}\right)$ as $k \rightarrow \infty$, it is convenient to write
\begin{align}
    \left( x_{k}^{(k)},y_{k}^{(k)}\right) = \left( \lambda^{k}(x^{*} + \Phi_{k}), y^{*}+\Psi_{k} \right).
    \label{eq:fpcomposed}
\end{align}
Notice $\Psi_k \to 0$ because $y^{(k)}_k \to y^*$. 
Since $y^{(k)}_0 = \cO \left( \sigma^{-k} \right)$ by Lemma \ref{le:xjyjBound},
the first component of \eqref{eq:T0kl1} gives
$x^{(k)}_k = \lambda^k x^{(k)}_0 + \cO \left( k \lambda^{k} \sigma^{-k} \right)$.
Since $x^{(k)}_0 \to 0$ as $k \to \infty$
we can conclude that in \eqref{eq:fpcomposed} we also have $\Phi_k \to 0$.

Next we use \eqref{eq:T11} and \eqref{eq:T0kl1}
to obtain formulas for $\Phi_k$ and $\Psi_k$
based on the knowledge that $\Phi_k, \Psi_k \to 0$ as $k \to \infty$.
By substituting \eqref{eq:fpcomposed} into \eqref{eq:T11} we obtain
\begin{align}
x^{(k)}_0 &= x^* + \cO \left( \lambda^k, \Psi_k \right),
\label{eq:xk0l} \\
y^{(k)}_0 &=  d_{1} x^{*} \lambda^k + d_{1} \Phi_k \lambda^k
+ d_3 x^{*^2} \lambda^{2 k} + d_4 x^* \Psi_k \lambda^k  + d_5 \Psi_k^2
\nonumber \\ &\quad+ \cO \left( \lambda^{3 k}, \Phi_k \lambda^{2 k}, \Phi_k \Psi_k \lambda^k, \Psi_k^3 \right).
\label{eq:yk0l}
\end{align}
We then substitute \eqref{eq:xk0l} and \eqref{eq:yk0l} into \eqref{eq:T0kl1},
noting that $k xy = \cO \left( k \sigma^{-k} \right)$ by \eqref{eq:xjyjBound} with $j = 0$, to obtain for the $y$-component
\begin{align}
y^{(k)}_k &= d_{1}x^*\lambda^{k}\sigma^{k} + \sigma^{k} \Bigl[ d_{1} \Phi_k \lambda^k 
+ d_{3}x^{*^2}\lambda^{2k} + d_4 x^* \Psi_k \lambda^k + d_5 \Psi_k^2
\nonumber \\ &\quad+ \cO \left(k \sigma^{-k}, \lambda^{3 k}, \Phi_k \lambda^{2 k}, \Phi_k \Psi_k \lambda^k, \Psi_k^3 \right) \Bigr].
\label{eq:ykkl}
\end{align}
We now match the two expressions for $y^{(k)}_k$, \eqref{eq:fpcomposed} and \eqref{eq:ykkl}.  If $y^{*} \neq \lambda^{k}\sigma^{k}d_{1}x^{*}$ then because $d_5 \ne 0$ the $\sigma^k d_5 \Psi_k^2$ term in \eqref{eq:ykkl} must balance the $y^*$ term in \eqref{eq:fpcomposed}, and so we must have $\Psi_{k} \sim t \sigma^{-\frac{k}{2}}$ for some $t \neq 0$. In this case the $(2,2)$-element of $DT_{1}\left(x_{k}^{(k)},y_{k}^{(k)}\right)$ is asymptotic to $2d_{5}t \sigma^{-\frac{k}{2}}$.  Consequently from \eqref{eq:tauk4l} and Lemma \ref{le:PjBoundsl}, $\tau_k$ is asymptotic to $2 d_5 t \sigma^{\frac{k}{2}}$.  This diverges as $k \to \infty$ contradicting the stability assumption of the ${\rm SR}_k$-solutions.

Therefore we must have $y^{*} = d_{1} \lambda^{k}\sigma^{k} x^{*}$, which implies $|\lambda \sigma| = 1$ and $|d_{1}| = \frac{y^{*}}{x^{*}}$. Moreover, if $\lambda \sigma = 1$ then $d_1 = \frac{y^*}{x^*}$.
\end{proof}
\section{The orientation-preserving case}
\label{sec:positiveCase}
In this section we prove Theorems~\ref{th:a1b1} and \ref{th:sufficientConditionsPositiveCase}.


\begin{proof}[Proof of Theorem \ref{th:a1b1}]
We write
\begin{equation}
\left( x^{(k)}_k, y^{(k)}_k \right) = \left( \lambda^k (x^* + \Phi_k), y^* + \Psi_k \right),
\label{eq:xkyk4}
\end{equation}
where $\Phi_k, \Psi_k \to 0$ as $k \to \infty$ as shown in the proof of Theorem~\ref{th:necessaryConditions}.
By substituting \eqref{eq:xkyk4} into \eqref{eq:T11} with $d_1 = \frac{y^*}{x^*}$ and $d_2 = 0$ we obtain
\begin{align}
x^{(k)}_0 &= x^* + c_1 x^* \lambda^k + c_2 \Psi_k
+ \cO \left( \lambda^{2 k}, \lambda^k \Phi_k, \Psi_k^2 \right),
\label{eq:xk04} \\
y^{(k)}_0 &=  y^* \lambda^k + \tfrac{y^*}{x^*} \Phi_k  \lambda^k
+ d_3 x^{*^2} \lambda^{2 k} + d_4 x^* \lambda^k \Psi_k + d_5 \Psi_k^2
+ \cO \left( \lambda^{3 k}, \lambda^{2 k} \Phi_k, \lambda^k \Phi_k \Psi_k,\Psi_k^3 \right).
\label{eq:yk04}
\end{align}
Similar to Lemma \ref{le:T0k} it can be shown that
\begin{equation}
T_0^k(x,y) = \begin{bmatrix}
\lambda^k x \left( 1 + k a_1 x y + \cO \left( k^2 x^2 y^2 \right) \right) \\
\frac{1}{\lambda^k} y \left( 1 + k b_1 x y + \cO \left( k^2 x^2 y^2 \right) \right)
\end{bmatrix}.
\label{eq:T0k4}
\end{equation}
For brevity we omit a derivation which can be achieved by induction as in the proof of Lemma \ref{le:T0k}.
We then substitute \eqref{eq:xk04} and \eqref{eq:yk04} into \eqref{eq:T0k4},
noting that $k^2 x^2 y^2 = \cO \left( k^2 \lambda^{2 k} \right)$, to obtain
\begin{align}
x^{(k)}_k &=  x^* \lambda^k + a_1 x^{*^2} y^* k \lambda^{2 k}
+ \cO \left( \lambda^{2 k}, k \lambda^{2 k} \Phi_k, \lambda^k \Psi_k, k \lambda^k \Psi_k^2 \right),
\label{eq:xkk4} \\
y^{(k)}_k &= y^* + \lambda^{-k} \left[ \tfrac{y^*}{x^*} \lambda^k \Phi_k
+ b_1 x^* y^{*^2} k \lambda^{2 k} + d_5 \Psi_k^2
+ \cO \left( \lambda^{2 k}, k \lambda^{2 k} \Phi_k, \lambda^k \Psi_k, k \lambda^k \Psi_k^2, \Psi_k^3 \right) \right],
\label{eq:ykk4}
\end{align}
where we have only explicitly written the terms that will be important below.
By matching \eqref{eq:xkyk4} to \eqref{eq:ykk4} we obtain
\begin{equation}
\lambda^k \Psi_k = \tfrac{y^*}{x^*} \lambda^k \Phi_k
+ b_1 x^* y^{*^2} k \lambda^{2 k} + d_5 \Psi_k^2
+ \cO \left( \lambda^{2 k}, k \lambda^{2 k} \Phi_k, \lambda^k \Psi_k, k \lambda^k \Psi_k^2, \Psi_k^3 \right).
\label{eq:Psik4}
\end{equation}
By matching \eqref{eq:xkyk4} to \eqref{eq:xkk4} we obtain
a similar expression for $\lambda^k \Phi_k$ which we substitute into \eqref{eq:Psik4} to obtain
\begin{equation}
\lambda^k \Psi_k = (a_1 + b_1) x^* y^{*^2} k \lambda^{2 k} + d_5 \Psi_k^2
+ \cO \left( \lambda^{2 k}, k \lambda^{2 k} \Phi_k, \lambda^k \Psi_k, k \lambda^k \Psi_k^2, \Psi_k^3 \right).
\label{eq:Psik5}
\end{equation}
Notice that $\lambda^k \Psi_k \to 0$ faster than either $k \lambda^{2 k}$ or $\Psi_k^2$
(but possibly not both)
and by inspection the same is true for every error term in \eqref{eq:Psik5}.
Now suppose for a contradiction that $a_1 + b_1 \ne 0$.
Then since $d_5 \ne 0$ the
$k \lambda^{2 k}$ and $\Psi_k^2$ terms in \eqref{eq:Psik5} must balance.
Thus $\Psi_k \sim \zeta \sqrt{k} \lambda^k$, for some $\zeta \ne 0$, and~\eqref{eq:tauk4l} becomes
\begin{equation}
\tau_k = {\rm trace} \left( \begin{bmatrix} t_j & u_j \\ v_j & w_j \end{bmatrix}
\begin{bmatrix} c_1 + \cO \left( \sqrt{k} \lambda^k \right)
& c_2 + \cO \left( \sqrt{k} \lambda^k \right) \\
d_1 + \cO \left( \sqrt{k} \lambda^k \right)
& 2 d_5 \Psi_k + \cO \left( \lambda^k \right)
\end{bmatrix} \right).
\label{eq:tauk4}
\end{equation}
By Lemma \ref{le:PjBoundsl} with $j = k$
the term involving $\Psi_k$ provides the leading order contribution to \eqref{eq:tauk4},
specifically
\begin{equation}
\tau_k \sim 2 d_5 \zeta \sqrt{k}.
\label{eq:tauk}
\end{equation}
Thus $\tau_k \to \infty$ as $k \to \infty$
and so the ${\rm SR}_k$-solutions are unstable for sufficiently values of $k$.
This contradicts the stability assumption in the theorem statement,
therefore $a_1 + b_1 = 0$.
\end{proof}

\begin{proof}[Proof of Theorem \ref{th:sufficientConditionsPositiveCase}]
We look for ${\rm SR}_k$-solutions for which the $k^{\rm th}$ point has the form
\begin{equation}
\left( x^{(k)}_k, y^{(k)}_k \right) = \Big( \lambda^k \left( x^* + \phi_k \lambda^{k} + \cO \left( \lambda^{2 k} \right) \right),
y^* + \psi_k \lambda^k + \cO \left( \lambda^{2 k} \right) \Big),
\label{eq:xkyk3}
\end{equation}
where $\phi_k, \psi_k \in \mathbb{R}$. Recall $\left( x^{(k)}_k, y^{(k)}_k \right)$ is a fixed point of $T_0^k \circ T_1$, so is equal to its image under \eqref{eq:T11} and \eqref{eq:T0k4}. Through matching \eqref{eq:xkyk3} to this image we obtain
\begin{equation}
\begin{split}
\phi_k &= c_1 x^* + c_2 \psi_k + a_1 x^{*^2} y^* k, \\
\psi_k &= \frac{y^* \phi_k}{x^*} + d_3 x^{*^2} + d_4 x^* \psi_k + d_5 \psi_k^2 + b_1 x^* y^{*^2} k.
\end{split}
\label{eq:pkqk}
\end{equation}
By solving \eqref{eq:pkqk} simultaneously for $\phi_k$ and $\psi_k$ we find that the terms involving $k$ cancel because $a_1 + b_1 = 0$ and there are two solutions. The values of $\psi_k$ for these are given by
\begin{equation}
\psi_k^\pm = \frac{1}{2 d_5} \left( 1 - \frac{c_2 y^*}{x^*} - d_4 x^* \pm \sqrt{\Delta} \right).
\label{eq:qk}
\end{equation}
It is readily seen that these correspond to ${\rm SR}_k$-solutions (for some fixed $\eta > 0$)
for sufficiently large values of $k$ by Lemma \ref{le:xjyjBound}.

We now investigate the stability of the two solutions.
With \eqref{eq:xkyk3} equation~\eqref{eq:tauk4l} becomes
\begin{equation}
\tau_k = {\rm trace} \left(
\begin{bmatrix} t_k & u_k \\ v_k & w_k \end{bmatrix}
\begin{bmatrix} c_1 + \cO \left( \lambda^k \right) & c_2 + \cO \left( \lambda^k \right) \\
d_1 + \cO \left( \lambda^k \right) & d_4 x^* \lambda^k + 2 d_5 \psi_k \lambda^k + \cO \left( \lambda^{2 k} \right) \end{bmatrix} \right).
\label{eq:taukAgain}
\end{equation}
Thus by Lemma \ref{le:PjBoundsl} we have
\begin{align}
\lim_{k \to \infty} \tau_k = d_4 x^* + 2 d_5 \psi_k^\pm
= 1 - \frac{c_2 y^*}{x^*} \pm \sqrt{\Delta} \,.
\label{eq:tauk2}
\end{align}
Also the determinant of $D \left( T_0^k \circ T_1 \right)(x_k,y_k)$ converges to
\begin{align}
\lim_{k \to \infty} \delta_k = c_1 d_2 - c_2 d_1
= -\frac{c_2 y^*}{x^*}.
\label{eq:deltak2}
\end{align}
To show that $\psi_k^-$ generates an asymptotically stable ${\rm SR}_k$-solution we verify
(i) $\delta_k - \tau_k + 1 > 0$,
(ii) $\delta_k + \tau_k + 1 > 0$, and 
(iii) $\delta_k < 1$,
for sufficiently large values of $k$ (see Fig.~\ref{fig:stability_triangle}).
From \eqref{eq:tauk2} and \eqref{eq:deltak2} with $\psi_k^-$ we have
\begin{align}
\lim_{k \to \infty} (\delta_k - \tau_k + 1) &= \sqrt{\Delta}, \label{eq:sqrtDelta} \\
\lim_{k \to \infty} (\delta_k + \tau_k + 1) &= 2 - \frac{2 c_2 y^*}{x^*} - \sqrt{\Delta}, \\
\lim_{k \to \infty} (1 - \delta_k) &= 1 + \frac{c_2 y^*}{x^*}.
\end{align}
These limits are all positive by \eqref{eq:stabilityCondition},
hence conditions (i)--(iii) (given just above equation \eqref{eq:sqrtDelta})
are satisfied for sufficiently large values of $k$.
Finally observe that with instead $\psi^+_k$ we have
$\lim_{k \to \infty} (\delta_k - \tau_k + 1) = -\sqrt{\Delta} < 0$
and $\lim_{k \to \infty} |\delta_k| = \frac{|c_2| y^*}{x^*} < 1$ hence $\psi^+_k$ generates a saddle ${\rm SR}_k$-solution.
\end{proof}

\section{The orientation-reversing case}
\label{sec:negativeCase}



\begin{proof}[Proof of Theorem \ref{th:sufficientConditionsNegativeCase}]
As in the proof of Theorem \ref{th:sufficientConditionsPositiveCase} we assume $\left( x^{(k)}_k, y^{(k)}_k \right)$ has the form \eqref{eq:xkyk3}.  This point is a fixed point of $T_0^k \circ T_1$ where
$T_0^k$ again has the form \eqref{eq:T0k4} except now $a_1 = b_1 = 0$.
By composing this with \eqref{eq:T11} we obtain
\begin{equation}
\left( T_0^k \circ T_1 \right)\left( x^{(k)}_k, y^{(k)}_k \right) =
\begin{bmatrix}
x^* \lambda^k + \left( c_1 x^* + c_2 \psi_k \right) \lambda^{2 k} + \cO \left( \lambda^{3 k} \right) \\
(-1)^k d_1 x^* + (-1)^k \left( d_1 \phi_k + d_3 x^{*^2} + d_4 x^* \psi_k + d_5 \psi_k^2 \right) \lambda^k + \cO \left( \lambda^{2 k} \right)
\end{bmatrix},
\label{eq:T0kT1xkyk}
\end{equation}
where we have substituted $\lambda \sigma = -1$ and $d_2 = 0$.
For the remainder of the proof we assume $k$ is even in the case $d_1 = \frac{y^*}{x^*}$
and $k$ is odd in the case $d_1 = -\frac{y^*}{x^*}$.
In either case
\begin{equation}
(-1)^k d_1 = \frac{y^*}{x^*},
\label{eq:d1Parity}
\end{equation}
and so the leading-order terms of \eqref{eq:xkyk3} and \eqref{eq:T0kT1xkyk} are the same.
By matching the next order terms we obtain
\begin{equation}
\begin{split}
\phi_k &= c_1 x^* + c_2 \psi_k \,, \\
\psi_k &= \frac{y^* \phi_k}{x^*} + (-1)^k \left( d_3 x^{*^2} + d_4 x^* \psi_k + d_5 \psi_k^2 \right).
\end{split}
\label{eq:pkqkNegativeCase}
\end{equation}
These produce the following two solutions for the value of $\psi_k$
\begin{equation}
\psi_k^\pm = \frac{(-1)^k}{2 d_5} \left( 1 - \frac{c_2 y^*}{x^*} - \frac{d_4 y^*}{d_1} \pm \sqrt{\Delta} \right),
\label{eq:qkNegativeCase}
\end{equation}
where we have further used \eqref{eq:d1Parity}.
Analogous to the proof of Theorem \ref{th:sufficientConditionsPositiveCase} we obtain
\begin{align}
\lim_{k \to \infty} \tau_k &= (-1)^k \left( d_4 x^* + 2 d_5 \psi_k^\pm \right) 
\label{eq:limtau}\\
&= 1 - \frac{c_2 y^*}{x^*} \pm \sqrt{\Delta},
\nonumber \\
\lim_{k \to \infty} \delta_k &= -\frac{c_2 y^*}{x^*},
\nonumber
\end{align}
and the proof is completed via the same stability arguments.
\end{proof}
\section{Conclusions}
\label{sec:conc}
In this paper we have considered single-round periodic solutions
associated with homoclinic tangencies of two-dimensional $C^\infty$ maps.
We have formalised these as ${\rm SR}_k$-solutions via Definition \ref{df:SRkSolution}.
The key arguments leading to our results are centred around calculations of $\tau_k$ ---
the sum of the eigenvalues associated with an ${\rm SR}_k$-solution.
Immediately we see from Fig.~\ref{fig:stability_triangle} that if $|\tau_k| > 2$ then the ${\rm SR}_k$-solution is unstable.

We first showed that if conditions \eqref{eq:tangencyCondition}--\eqref{eq:globalResonanceCondition} do not all hold then $|\tau_k| \to \infty$ as $k \to \infty$,
thus at most finitely many ${\rm SR}_k$-solutions can be stable (Theorem \ref{th:necessaryConditions}).
Equation \eqref{eq:det1Condition}, namely $|\lambda \sigma| = 1$, splits into two fundamentally distinct cases.
If $\lambda \sigma = -1$
then the resonant terms in $T_0$ that cannot be eliminated by a coordinate change
are of sufficiently high order that they have no bearing on the results.
In this case $\tau_k$ converges to the finite value \eqref{eq:limtau} and
infinitely many ${\rm SR}_k$-solutions can indeed be stable
as a codimension-three phenomenon (Theorem \ref{th:sufficientConditionsNegativeCase}).
If instead $\lambda \sigma = 1$ then
$|\tau_k|$ is asymptotically proportional to $\sqrt{k}$ unless $a_1 + b_1 = 0$,
i.e.~the coefficients of the leading-order resonance terms cancel (Theorem \ref{th:a1b1}).
This is the only additional condition needed to have infinitely many ${\rm SR}_k$-solutions,
aside from the inequalities $\Delta > 0$ and \eqref{eq:stabilityCondition},
thus in this case the infinite coexistence is codimension-four (Theorem \ref{th:sufficientConditionsPositiveCase}).

We have demonstrated the infinite coexistence with an artificial family of maps in \S\ref{sec:example}.
It remains to identify the infinite coexistence in previously studied prototypical maps.
The relatively high codimension possibly explains why this does not appear to have been done already.
Yet the codimension is not so high that the phenomenon cannot be expected to
have an important role in some applications.
A determination of the typical bifurcation structure that surrounds
these codimension-three and four bifurcations remains for future work.
It also may be of interest to generalise the results to incorporate cubic and higher order tangencies
and extend the results to higher dimensional maps.

\bibliographystyle{plain}

\end{document}